\newtheorem{theorem}{Theorem}[section]
\newtheorem{corollary}[theorem]{Corollary}
\newtheorem{lemma}[theorem]{Lemma}
\newtheorem{proposition}[theorem]{Proposition}
\newtheorem{definition-proposition}[theorem]{Definition-Proposition}
\theoremstyle{definition}
\newtheorem{definition}[theorem]{Definition}
\newtheorem{example}[theorem]{Example}
\newcommand{\Ext}{\operatorname{Ext}\nolimits}
\newcommand{\Hom}{\operatorname{Hom}\nolimits}
\newcommand{\End}{\operatorname{End}\nolimits}
\newcommand{\ind}{\mathsf{ind}\hspace{.01in}}
\renewcommand{\mod}{\mathsf{mod}\hspace{.01in}}
\newcommand{\add}{\mathsf{add}\hspace{.01in}}
\newcommand{\Fac}{\mathsf{Fac}\hspace{.01in}}
\newcommand{\tilt}{\mbox{\rm tilt}\hspace{.01in}}
\newcommand{\sttilt}{\mbox{\rm s$\tau$-tilt}\hspace{.01in}}
\renewcommand{\AA}{\mathcal{A}}
\newcommand{\CC}{\mathcal{C}}
\newcommand{\XX}{\mathcal{X}}
\newcommand{\YY}{\mathcal{Y}}
\newcommand{\PP}{\mathcal{P}}
\newcommand{\rad}{\operatorname{rad}\nolimits}
\newcommand{\RHom}{\mathbf{R}\strut\kern-.2em\operatorname{Hom}\nolimits}
\numberwithin{equation}{section}
\def\Im{\mathop{\rm Im}\nolimits}
\def\Ker{\mathop{\rm Ker}\nolimits}
\def\Coker{\mathop{\rm Coker}\nolimits}
\def\rad{\mathop{\rm rad}\nolimits}
\def\id{\mathop{\rm id}\nolimits}
\def\pd{\mathop{\rm pd}\nolimits}
\begin{document}
\title{Tilting modules over Auslander-Gorenstein Algebras}

\thanks{2000 Mathematics Subject Classification: 16G10, 16E10.}
\thanks{Keywords:  $n$-Gorenstein algebra,  tilting module,
 support $\tau$-tilting module}
 \thanks{The first author is supported
by JSPS Grant-in-Aid for Scientific Research (B) 24340004, (C)
23540045 and (S) 15H05738. The second author is supported by NSFC
(Nos. 11401488, 11571164 and 11671174), Jiangsu Government
Scholarship for Overseas Studies (JS-2014-352) and NSF for Jiangsu
Province (BK20130983)}.

\author{Osamu Iyama}
\address{O. Iyama: Graduate School of Mathematics, Nagoya
University, Nagoya, 464-8602, Japan}
\email{iyama@math.nagoya-u.ac.jp}
\author{Xiaojin Zhang}
\address{X. Zhang: School of Mathematics and Statistics, NUIST,
Nanjing, 210044, P. R. China}
\email{xjzhang@nuist.edu.cn}
\maketitle

\begin{abstract}

For a finite dimensional algebra $\Lambda$ and a non-negative
integer $n$, we characterize when the set $\tilt_n\Lambda$ of
additive equivalence classes of tilting modules with projective
dimension at most $n$ has a minimal (or equivalently, minimum)
element. This generalize results of Happel-Unger. Moreover, for an
$n$-Gorenstein algebra $\Lambda$ with $n\geq 1$, we construct a
minimal element in $\tilt_{n}\Lambda$.
 As a result, we give equivalent conditions for a
$k$-Gorenstein algebra to be Iwanaga-Gorenstein. Moreover, for an
$1$-Gorenstein algebra $\Lambda$ and its factor algebra
$\Gamma=\Lambda/(e)$, we show that there is a bijection between
$\tilt_1\Lambda$ and the set $\sttilt\Gamma$ of isomorphism classes
of basic support $\tau$-tilting $\Gamma$-modules, where $e$ is an
idempotent such that $e\Lambda $ is the additive generator of
projective-injective $\Lambda$-modules.
\end{abstract}


\section{Introduction}

Tilting theory is very essential in the representation theory of
algebras. There are many works (see \cite{AsSS, AnHK, Ha} and
references there) which made the theory fruitful. One interesting
topic in tilting theory is to classify tilting modules for some
given algebras. Among these, tilting modules over algebras of large
dominant dimension have gained much more attention. For more
details, we refer to \cite{CX,CrS,NRTZ,IZ,PS,K}.

For an algebra $\Lambda$, denote by $\mod\Lambda$ the category of
finitely generated right $\Lambda$-modules. Recall that a
$\Lambda$-module $T$ in $\mod\Lambda$ is called a {\it tilting
module} of finite projective dimension if the projective dimension
of $T$ is $n<\infty$, $\Ext_{\Lambda}^{i}(T,T)=0$ holds for $i\geq1$
and there is an exact sequence $0\rightarrow\Lambda\rightarrow
T_0\rightarrow\cdots\rightarrow T_n\rightarrow0$ with $T_i\in\add
T$, where we use $\add T$ to denote the subcategory of $\mod\Lambda$
consisting of direct summands of finite direct sums of $T$. We say
that $M,N\in\mod\Lambda$ are \emph{additively equivalent} if $\add
M=\add N$. For a non-negative integer $n$, let $\tilt_n\Lambda$ be
the set consisting of additive equivalence classes of tilting
modules with projective dimension at most $n$, and
$\tilt\Lambda=\tilt_\infty\Lambda:=\bigcup_{n\ge0}\tilt_n\Lambda$.
There is a natural partial order on the set $\tilt\Lambda$ defined
as follows \cite{RS,HaU2,AiI}: For $T,U\in\tilt\Lambda$, $T\geq U$
if $\Ext^i_\Lambda(T,U)=0$ for all $i>0$. This is equivalent to that
$T^\bot\supseteq U^\bot$, where $T^\bot$ is the subcategory of
$\mod\Lambda$ consisting of modules $M$ such that
$\Ext_{\Lambda}^i(T,M)=0$ for any $i\geq1$. Clearly $\Lambda$ is the
maximal element in $\tilt\Lambda$, and if $\Lambda$ is
Iwanaga-Gorenstein, then $\mathbb{D}\Lambda$ is the minimal element
in $\tilt\Lambda$, where $\mathbb{D}$ is the ordinary duality.
However, it is difficult to find the minimal element in the set of
tilting $\Lambda$-modules for an arbitrary algebra $\Lambda$.

For a right $\Lambda$-module $M$, let $0\rightarrow M\rightarrow
I^0(M)\rightarrow I^1(M)\rightarrow\cdots$ be a minimal injective
resolution
 of $M$ and $\cdots\to P_{1}(M)\to P_{0}(M)\to M\to0$ a minimal projective resolution
 of $M$. Recall that an algebra $\Lambda$ is called {\it $n$-Gorenstein
(resp. quasi $n$-Gorenstein)} if  the projective dimension of
$I^i(\Lambda)$ is less than or equal to $i$ (resp. $i+1$) for $0\leq
i\leq n-1$ \cite{FGR,Hu}. There are many works on $n$-Gorenstein
algebras \cite{AuR3,AuR4,Cl,HuI,IwS}. But little is known for the
tilting modules over this class of algebras. Our first aim is to
study the existence of minimal tilting modules over this class of
algebras.
For a module $M\in\mod\Lambda$, we denote by $\Omega^i M$ (resp.
$\Omega^{-i}M$) the $i$-th syzygy (resp. cosyzygy) of $M$.
A special case of our first main Theorem \ref{2.9} is the following:

\begin{theorem}\label{1.1}(Corollary \ref{2.5})
Let $\Lambda$ be a quasi $n$-Gorenstein algebra and $0\le j\le n$.
Then
$(\bigoplus^{j-1}_{i=0}I^i(\Lambda))\oplus\Omega^{-j}\Lambda$ is the minimum element in $\tilt_j\Lambda$ .
\end{theorem}

For example, algebras with dominant dimension at least $n$ are
$n$-Gorenstein. In this case, the tilting module given in Theorem
\ref{1.1} was studied recently in \cite{CrS,NRTZ,PS} (see Example
\ref{2.7}).

Recall that a subcategory $\mathscr{C}$ of $\mod\Lambda$ is called
{\it contravariantly finite} if for any $M$ in $\mod\Lambda$ there
is a morphism $C_M\rightarrow M$ with $C_M\in\mathscr{C}$ such that
the functor sequence
$\Hom_{\Lambda}(-,C_M)\rightarrow\Hom_{\Lambda}(-,M)\rightarrow0$ is
exact over $\mathscr{C}$. Dually, one can define covariantly finite
subcategories. For a subcategory $\mathscr{D}$ of $\mod\Lambda$,
denote by $\mathscr{D}^{\bot}$ the subcategory consisting of modules
$N$ such that $\Ext_{\Lambda}^{i }(M,N)=0$ for $i\geq 1$ and $M\in
\mathscr{D}$. We denote by $M^{\bot}$ if $\mathscr{D}=\add M$.
Dually, one can define ${^{\bot}\mathscr{D}}$ and ${^{\bot}M}$.

Denote by $\PP_\infty(\Lambda)$ the subcategory consisting of
$\Lambda$-modules with finite projective dimension. Happel and Unger
in \cite[Theorem 3.3]{HaU2} showed that $\tilt\Lambda$ has a minimal
element if and only if $\PP_\infty(\Lambda)$ is contravariantly
finite. It is natural to ask if there is a similar result for
$\tilt_n\Lambda$. We give a positive answer by proving the following
result, where we denote by $\PP_n(\Lambda)$ the subcategory
consisting of modules with projective dimension at most $n$, where
the equivalence of (1) and (5) for $n=\infty$ recovers \cite[Theorem
3.3]{HaU2}, and the equivalence of (3) and (5) for integer $n$
recovers \cite[Corollary 2.3]{HaU1}.

\begin{theorem}\label{1.9}(Theorem \ref{2.12}) Let $\Lambda$ be an algebra, and let $n$ be $\infty$ or a non-negative
integer. Then the following are equivalent:
\begin{enumerate}[\rm(1)]
\item $\tilt_n\Lambda$ has a minimal element.

\item $\tilt_n\Lambda$ has the minimum element.

\item There exists $T\in\tilt_n\Lambda$ such that ${}^\bot T\supseteq\PP_n(\Lambda)$.

\item There exists $T\in\tilt_n\Lambda$ such that $^{\bot}({T^{\bot}})=\PP_n(\Lambda)$.

\item The subcategory $\PP_n(\Lambda)$ is contravariantly finite.
\end{enumerate}
\end{theorem}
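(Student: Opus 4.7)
My plan is to prove the equivalences along the cycle $(5) \Rightarrow (4) \Rightarrow (3) \Rightarrow (2) \Rightarrow (1) \Rightarrow (5)$. Three of these implications are short. The implication $(2) \Rightarrow (1)$ is tautological. The implication $(4) \Rightarrow (3)$ holds because $T \in T^{\bot}$ (since $\Ext^i_\Lambda(T,T) = 0$ for $i \geq 1$), whence ${}^{\bot}(T^{\bot}) \subseteq {}^{\bot}T$. For $(3) \Rightarrow (2)$: given any $U \in \tilt_n\Lambda$, the inclusion $\add U \subseteq \PP_n(\Lambda) \subseteq {}^{\bot}T$ forces $\Ext^i_\Lambda(U,T) = 0$ for all $i \geq 1$, i.e., $T \leq U$, so $T$ is the unique minimum.

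The equivalence $(4) \Leftrightarrow (5)$ I would obtain from the Auslander--Reiten correspondence, which sets up a bijection between basic tilting modules of finite projective dimension and contravariantly finite resolving subcategories of $\PP_\infty(\Lambda)$, via $T \mapsto \XX_T := {}^{\bot}(T^{\bot}) \cap \PP_\infty(\Lambda)$. Since $\add T \subseteq \PP_n(\Lambda)$ whenever $T \in \tilt_n\Lambda$ and $\PP_n(\Lambda)$ is resolving, one has $\XX_T \subseteq \PP_n(\Lambda)$, so the correspondence restricts to a bijection between $\tilt_n\Lambda$ and contravariantly finite resolving subcategories of $\PP_n(\Lambda)$. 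Both (4) and (5) then say that $\PP_n(\Lambda)$ itself occurs in this restricted bijection, and so are equivalent.

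The essential implication is $(1) \Rightarrow (5)$. Given a minimal $T \in \tilt_n\Lambda$, its subcategory $\XX_T \subseteq \PP_n(\Lambda)$ is already contravariantly finite by the Auslander--Reiten correspondence, so my task is to show $\XX_T = \PP_n(\Lambda)$. Suppose not and pick $M \in \PP_n(\Lambda) \setminus \XX_T$. The contravariantly finite approximation of $M$ yields a short exact sequence $0 \to Y \to X \to M \to 0$ with $X \in \XX_T$ and $Y \in T^{\bot}$; moreover $Y \in \PP_n(\Lambda)$, since $X, M \in \PP_n(\Lambda)$. My plan is to show that the resolving closure $\XX'$ of $\XX_T \cup \{M\}$ inside $\PP_n(\Lambda)$ is again contravariantly finite; the Auslander--Reiten correspondence will then supply a tilting module $T' \in \tilt_n\Lambda$ with $\XX_{T'} = \XX' \supsetneq \XX_T$, forcing $T' < T$ and contradicting the minimality of $T$.

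The main obstacle is precisely verifying the contravariant finiteness of this enlarged subcategory $\XX'$. For an arbitrary $N \in \mod\Lambda$, I expect to construct a right $\XX'$-approximation of $N$ by starting from its right $\XX_T$-approximation and iteratively splicing in copies of the sequence $0 \to Y \to X \to M \to 0$ to absorb the morphisms from $M$ (and, after repetition, from iterated extensions involving $M$) into $N$, via a horseshoe-style induction on projective dimension. This is where the bulk of the technical work lies, and where the hypothesis $T \in \tilt_n\Lambda$ (so that $Y$ again lands in $\PP_n(\Lambda)$) is used crucially to keep the construction inside $\PP_n(\Lambda)$.
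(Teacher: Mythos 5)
Your easy implications are fine: $(2)\Rightarrow(1)$ is trivial, $(4)\Rightarrow(3)$ follows from $T\in T^{\bot}$, $(3)\Rightarrow(2)$ follows since every $U\in\tilt_n\Lambda$ lies in $\PP_n(\Lambda)\subseteq{}^\bot T$, and $(4)\Leftrightarrow(5)$ via the restricted Auslander--Reiten correspondence is exactly how the paper handles that part (Theorem \ref{cotorsion pair}). The problem is $(1)\Rightarrow(5)$, which is the entire content of the theorem and which you have only sketched. The crux of your plan is the claim that the resolving closure $\XX'$ of $\XX_T\cup\{M\}$ inside $\PP_n(\Lambda)$ is again contravariantly finite, and this is not established by the ``splicing/horseshoe'' idea you describe. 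The objects of $\XX'$ are arbitrary direct summands of iterated extensions of syzygies of $M$ by objects of $\XX_T$; for the natural candidate approximation $g\colon X_N\oplus M^{\oplus d}\to N$, a map $E\to N$ out of an extension $0\to A\to E\to B\to0$ whose ends are already handled need not factor through $g$ unless $\Ext^1_\Lambda(B,\Ker g)=0$, so one needs \emph{special} approximations for the whole closure, which is precisely the original difficulty in disguise. There is no general result asserting that the resolving closure of a contravariantly finite resolving subcategory together with one extra module of $\PP_n(\Lambda)$ is contravariantly finite, and your sketch does not supply one. So the proposal has a genuine gap at its only hard step.

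The paper proves this direction by a different mechanism, namely tilting mutation. If $T$ is minimal, there can be no monomorphism $f\colon X\to T'$ with $X$ an indecomposable summand of $T$, $T'\in\add(T/X)$ and $\Coker f\in\PP_n(\Lambda)$, since otherwise the minimal left $\add(T/X)$-approximation of $X$ would produce a strictly smaller element of $\tilt_n\Lambda$ (Proposition \ref{tilting mutation}). By Lemma \ref{two minimality} this forces every exact sequence $0\to T_1\to T_0\to C\to0$ with $T_i\in\add T$ and $C\in\PP_n(\Lambda)$ to split, which by induction along an $\add T$-resolution inside $T^\bot$ gives $\PP_n(\Lambda)\cap T^{\bot}=\add T$; it then follows that $\add T$ is a minimal cogenerator of $\PP_n(\Lambda)$, so $\Ext^{1}_\Lambda(\PP_n(\Lambda),T)=0$ by Proposition \ref{cover}, and dimension shifting yields ${}^{\bot}(T^{\bot})=\PP_n(\Lambda)$, i.e.\ condition (4), hence (5). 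If you wish to keep your enlargement strategy you must actually prove the contravariant finiteness of $\XX'$; the mutation argument avoids this issue entirely.
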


For any $M\in\mod \Lambda$, denote by $\id_{\Lambda}M$ (resp.
$\pd_{\Lambda}M$) the injective (resp. projective) dimension of $M$.
An algebra is called {\it Iwanaga-Gorenstein} if both
$\id_{\Lambda}\Lambda$ and
 $\id_{\Lambda^{\rm op}}\Lambda$ are finite.
 In \cite{AuR3}, Auslander and Reiten posed a question which says
that if $\Lambda$ is $n$-Gorenstein for all positive integer $n$
then $\Lambda$ is Iwanaga-Gorenstein. This is a generalization of
the Nakayama conjecture which says that an algebra with infinite
dominant dimension is self-injective. Moreover, Auslander and Reiten
\cite[p25]{AuR3} studied the question whether the
$\PP_\infty(\Lambda)$ is contravariantly finite if $\Lambda$ is
$n$-Gorenstein for all positive integer $n$.

As a result of Theorem \ref{1.1} and Theorem \ref{1.9}, we connect
the two questions of Auslander and Reiten above and show the
following corollary which covers \cite[Corollary 5.5]{AuR3}.

\begin{corollary}(Corollary \ref{2.8}) Let $\Lambda$ be a
$k$-Gorenstein algebra for all positive integer $k$, and $n$ a
non-negative integer. Then the following are equivalent:
\begin{enumerate}[\rm(1)]
\item $\Lambda$ is Iwanaga-Gorenstein with $\id_{\Lambda}\Lambda=\id_{\Lambda^{\rm op}}\Lambda\le n$.

\item $\id_{\Lambda}\Lambda\le n$.

\item $\id_{\Lambda^{\rm op}}\Lambda\le n$.

\item $\tilt\Lambda$ has the minimum element $T$ with $\pd_\Lambda T\le n$.

\item $\tilt\Lambda^{\rm op}$ has the minimum element $T$ with $\pd_\Lambda T\le n$.

\item The subcategory $\PP_\infty(\Lambda)$ is contravariantly finite and $\PP_\infty(\Lambda)=\PP_n(\Lambda)$ holds.

\item The subcategory $\PP_\infty(\Lambda^{\rm op})$ is contravariantly finite and $\PP_\infty(\Lambda^{\rm op})=\PP_n(\Lambda^{\rm op})$ holds.
\end{enumerate}
\end{corollary}

It may be interesting to ask the following question for a finite
dimensional algebra $\Lambda$: Does the existence of the minimum
element of $\tilt\Lambda$ imply the existence of minimum element of
$\tilt\Lambda^{\rm op}$?

Now we turn to the classical tilting modules over $1$-Gorenstein
algebras and study the connections with $\tau$-tilting theory.

 In 2014, Adachi, Iyama and Reiten introduced $\tau$-tilting modules (see Definition \ref{3.1})
which are generalizations of classical tilting modules from the
viewpoint of mutation. For general details of $\tau$-tilting theory,
we refer to \cite{AIR,DIJ,IJY,J,W,Zh} and references.

For an algebra $\Lambda$, denote by $\sttilt\Lambda$ the set of the
isomorphism classes of basic support $\tau$-tilting
$\Lambda$-modules (see Definition \ref{3.1}). In \cite{DIRRT, IZ} it is showed that the functor
$-\otimes_{\Lambda}\Gamma$ induces a map from $\sttilt\Lambda$ to
$\sttilt\Gamma$, where $\Gamma$ is a factor algebra of $\Lambda$.
Recall that $\tilt_1\Lambda$ is the set of additive equivalence
classes of classical tilting $\Lambda$-modules. Our third main
result is the following.

\begin{theorem}\label{1.3}(Theorem \ref{3.5})
Let $\Lambda$ be an  $1$-Gorenstein algebra and let $\Gamma$ be the
factor algebra $\Lambda/(e)$, where $e$ is an idempotent such that
$e\Lambda$ is an additive generator of projective-injective modules.
Then $-\otimes_{\Lambda}\Gamma$ induces a bijection from
$\tilt_1\Lambda$ to $\sttilt\Gamma$.
\end{theorem}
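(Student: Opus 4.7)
The strategy is to construct the map $F : \tilt_1\Lambda \to \sttilt\Gamma$ sending $T$ to $T\otimes_{\Lambda}\Gamma$ and to exhibit an explicit inverse $G$. The starting structural observation is that every $T \in \tilt_1\Lambda$ contains $e\Lambda$ as a direct summand: composing $e\Lambda \hookrightarrow \Lambda \hookrightarrow T_0$ coming from any tilting resolution $0 \to \Lambda \to T_0 \to T_1 \to 0$ yields an injection which splits because $e\Lambda$ is injective. Writing $T = (e\Lambda)^{c} \oplus \bar T$ with $\bar T$ having no projective-injective summand, one has $F(T) = \bar T\otimes_{\Lambda}\Gamma = \bar T/\bar T e$.

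To see that $F(T) \in \sttilt\Gamma$, I would combine two ingredients: by \cite{AIR}, every classical tilting $\Lambda$-module is a faithful support $\tau$-tilting $\Lambda$-module; and by \cite{DIRRT, IZ}, the functor $-\otimes_{\Lambda}\Gamma$ carries support $\tau$-tilting $\Lambda$-modules to support $\tau$-tilting $\Gamma$-modules. A summand count confirms that $|\bar T| = |\Lambda| - |e\Lambda|$ matches the number of simple $\Gamma$-modules, so $F(T)$ has the correct rank.

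For the inverse, given $M \in \sttilt\Gamma$, take a minimal projective presentation $P_1 \to P_0 \to M \to 0$ in $\mod\Gamma$ and lift it to $\tilde f : Q_1 \to Q_0$ in $\mod\Lambda$, where each $Q_j$ is the unique projective $\Lambda$-module with $Q_j\otimes_{\Lambda}\Gamma = P_j$ and no projective-injective summand (such a lift exists by the natural bijection between projective $\Gamma$-modules and projective $\Lambda$-modules whose top is supported away from $e$). Set $G(M) = e\Lambda \oplus \Coker(\tilde f)$. Verifying $G(M) \in \tilt_1\Lambda$ amounts to: rigidity $\Ext^1_{\Lambda}(G(M), G(M)) = 0$, which reduces to the $\tau$-rigidity of $M$ together with the observation that $e\Lambda$ is both projective and injective to handle the mixed $\Ext$ terms; and the existence of a resolution $0 \to \Lambda \to T_0 \to T_1 \to 0$ with $T_i \in \add G(M)$, which is obtained by starting from the injective envelope $\Lambda \hookrightarrow I^0(\Lambda)$ with $I^0(\Lambda) \in \add(e\Lambda)$ (using that $\Lambda$ is $1$-Gorenstein), pushed out against $\tilde f$.

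The main obstacle is showing that the lift $\tilde f$ is injective, so that $\pd_{\Lambda}\Coker(\tilde f) \leq 1$. This is precisely where the $1$-Gorenstein hypothesis is essential: the $\Tor$ long exact sequence applied to $-\otimes_{\Lambda}\Gamma$ must be controlled to rule out a spurious kernel coming from the projective-injective part of the algebra. Once this injectivity is established, the identities $F\circ G = \id$ and $G\circ F = \id$ follow from the uniqueness of minimal projective presentations and the bijective correspondence between projective $\Gamma$-modules and projective $\Lambda$-modules without projective-injective summands.
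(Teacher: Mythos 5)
Your forward map is sound and matches the paper's Lemma 3.3(1): every classical tilting module over a $1$-Gorenstein algebra has $e\Lambda$ as a summand, by exactly the splitting argument you give, and $-\otimes_\Lambda\Gamma$ lands in $\sttilt\Gamma$ by the cited reduction results. The genuine gap is your inverse $G$. Setting $G(M)=e\Lambda\oplus\Coker(\tilde f)$ for a lift $\tilde f$ of the minimal projective presentation of $M$ over $\Gamma$ does not produce a tilting $\Lambda$-module in general: it misses precisely those indecomposable summands of the true preimage that lie in $\Fac(e\Lambda)$ and are therefore killed by $-\otimes_\Lambda\Gamma$ without being summands of $e\Lambda$. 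The cleanest failure is $M=0\in\sttilt\Gamma$: its minimal presentation is trivial, so $G(0)=e\Lambda$, which has $|e\Lambda|<|\Lambda|$ summands unless $\Lambda$ is self-injective, whereas the actual preimage of $0$ is $\mathbb{D}\Lambda$. A non-degenerate instance is $\Lambda=KA_3$ ($3\to2\to1$) with $M=S'(2)$: your recipe gives $P(3)\oplus S(2)$, while the preimage is $P(3)\oplus S(2)\oplus I(2)$ (see Example \ref{4.3}); the summand $I(2)$ is invisible to any construction built only from a lift of the presentation of $M$, since $I(2)\otimes_\Lambda\Gamma=0$. In addition, you explicitly defer the injectivity of $\tilde f$ as an ``obstacle'' rather than proving it, so even the part of the argument you do attempt is left incomplete.

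The paper does not construct an inverse by hand. It first shows (Lemma \ref{3.3}) that $\tilt_1\Lambda$ equals the set $\sttilt_{e\Lambda}\Lambda$ of support $\tau$-tilting modules admitting $e\Lambda$ as a summand: one inclusion is your splitting argument, and the other uses that $\Lambda$ embeds into $\add e\Lambda$ (this is where $1$-Gorensteinness enters), so such modules are faithful and hence tilting by \cite{AIR}. It then invokes Jasso's $\tau$-tilting reduction (Theorem \ref{3.4}) with $U=e\Lambda$: the Bongartz completion of $U$ is $\Lambda$ itself and $e_U=e$, so one gets a bijection $\sttilt_{e\Lambda}\Lambda\to\sttilt\Gamma$, which is identified with $-\otimes_\Lambda\Gamma$ via the canonical sequence $0\to T'(e)\to T'\to T'/T'(e)\to0$ for the torsion pair $(\Fac e\Lambda,(e\Lambda)^{\bot_0})$. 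To salvage your approach you would have to complete $e\Lambda\oplus\Coker(\tilde f)$ by a suitable summand taken from $\Fac(e\Lambda)$, which amounts to re-proving the surjectivity half of Jasso's theorem.
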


 For an algebra $\Lambda$, denote by $\#\sttilt\Lambda$ the number of elements in the set $\sttilt\Lambda$. As an immediate consequence, we have the following corollary.
   Recall from \cite{DIJ} that an algebra
$\Lambda$ is called {\it $\tau$-tilting finite} if there are finite
number of basic $\tau$-tilting modules up to isomorphism.

\begin{corollary}\label{1.4}(Corollaries \ref{3.6}, \ref{4.5} and \ref{3.8})
For each case, let $e$ be the idempotent such that $e\Lambda_n$ is
the additive generator of projective-injective $\Lambda_n$-modules.
\begin{enumerate}[\rm(1)]
\setlength{\itemsep}{0pt}

 \item Let $\Lambda_n=KQ$ be the hereditary Nakayama algebra with $Q=A_n$.
 Then there are bijections
 \[\tilt_1{\Lambda_n}\simeq\sttilt\Lambda_{n-1}\simeq
\{\mbox{clusters of the cluster algebra of type $A_{n-1}$}\}.\]
Thus $\#\sttilt\Lambda_n=(2(n+1))!/((n+2)!(n+1)!)$.
\item Let $\Lambda_n$ be the Auslander algebra of $K[x]/(x^n)$, $\Gamma_{n-1}$
be the preprojective algebra of $Q=A_{n-1}$ and $\mathfrak{S}_{n}$ be the symmetric group.\emph{} Then there are bijections
\[\tilt_{1}\Lambda_{n}\simeq\sttilt\Gamma_{n-1}\simeq\mathfrak{S}_{n}.\]
Thus $\#\sttilt\Lambda_n=n!$.
\item Let $\Lambda_{n}$ be the Auslander algebra of the hereditary Nakayama
algebra $KQ$ with $Q=A_n$. Then there is a bijection
$\tilt_1{\Lambda_n}\simeq\sttilt\Lambda_{n-1}$.
Thus $\Lambda$ is $\tau$-tilting finite if and only if $n\leq 4$.
\end{enumerate}
\end{corollary}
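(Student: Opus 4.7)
The plan is to derive each part by applying Theorem \ref{1.3} to the algebra $\Lambda_n$ in question. I first verify the $1$-Gorenstein hypothesis. In (1), this is automatic because $\Lambda_n=KA_n$ is hereditary. In (2) and (3), $\Lambda_n$ is an Auslander algebra, so its dominant dimension is at least $2$, which forces $\Lambda_n$ to be $1$-Gorenstein (in fact $2$-Gorenstein) by the discussion following Theorem \ref{1.1}.

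With Theorem \ref{1.3} available, the work in each case reduces to identifying the factor algebra $\Gamma=\Lambda_n/(e)$, where $e\Lambda_n$ is the additive generator of the projective-injective $\Lambda_n$-modules, and then appealing to a known count of $\sttilt\Gamma$. For (1), with $Q=A_n$ linearly oriented, the unique projective-injective indecomposable is $P_1=I_n$, so the ideal generated by the corresponding idempotent removes one vertex and gives $\Lambda_n/(e)\simeq KA_{n-1}=\Lambda_{n-1}$. The second bijection with clusters of the type $A_{n-1}$ cluster algebra is the classical correspondence between support $\tau$-tilting modules of a hereditary algebra $KQ$ and clusters of the cluster algebra attached to $Q$, and the Catalan cardinality $\#\sttilt\Lambda_n=(2(n+1))!/((n+2)!(n+1)!)$ is standard for hereditary algebras of type $A_n$. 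For (2), set $M=\bigoplus_{i=1}^{n}K[x]/(x^i)$, so that $\Lambda_n=\End_{K[x]/(x^n)}(M)^{\op}$ is the Auslander algebra. Its unique projective-injective indecomposable summand is the one corresponding to $K[x]/(x^n)$, and the classical identification shows $\Lambda_n/(e)\simeq\Gamma_{n-1}$, the preprojective algebra of type $A_{n-1}$. Combining this with Mizuno's bijection $\sttilt\Gamma_{n-1}\simeq\mathfrak{S}_n$ yields the result.

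For (3), the same strategy applies: identify the projective-injective summands of the Auslander algebra of $KA_n$ via its Auslander-Reiten quiver, recognize the remaining summands as generating the Auslander algebra of $KA_{n-1}$ so that $\Lambda_n/(e)\simeq\Lambda_{n-1}$, and invoke Theorem \ref{1.3} to obtain the bijection $\tilt_1\Lambda_n\simeq\sttilt\Lambda_{n-1}$. The $\tau$-tilting finiteness equivalence then follows by induction on $n$ combined with explicit verification of small cases, where the cutoff $n\le 4$ reflects a known classification result for Auslander algebras of type $A$. The main technical obstacle across the three parts is the concrete identification of the factor algebras in (2) and (3); for (2) this is a classical fact about preprojective algebras, whereas for (3) it requires an explicit Auslander-Reiten quiver computation to match the quotient with the Auslander algebra of $KA_{n-1}$.
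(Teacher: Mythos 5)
Your proposal follows essentially the same route as the paper: check the $1$-Gorenstein hypothesis, identify the factor algebra $\Lambda_n/(e)$ in each case, apply Theorem \ref{1.3} (Theorem \ref{3.5}), and then quote known enumerations ($\sttilt$ of hereditary type $A_{n-1}$ via clusters, Mizuno's bijection with $\mathfrak{S}_n$, and Kajita's count of tilting modules over Auslander algebras of type $A$, which is what the paper uses for the $n\le 4$ cutoff in (3) rather than an induction). One caveat: in case (1) you justify $1$-Gorensteinness by saying it is ``automatic because $\Lambda_n=KA_n$ is hereditary,'' but being hereditary does not imply $1$-Gorenstein --- by \cite[Proposition 1.17]{I}, as the paper recalls, a hereditary algebra is $1$-Gorenstein if and only if it is Nakayama; the hypothesis holds here precisely because the linearly oriented $A_n$ quiver gives a Nakayama algebra, not because of heredity alone. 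With that justification corrected, the argument matches the paper's.
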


Now we state the organization of this paper as follows:

In Section 2, we recall some preliminaries. In Section 3, we give
some equivalent conditions to the existence of minimal elements in
$\tilt_n\Lambda$ and show a Happel-Unger type theorem. Moreover,
 we construct minimal tilting modules for $n$-Gornestein algebras and show Theorem \ref{1.1}.
In Section 4, we build a connection between classical tilting
modules over 1-Gorenstein algebras and support $\tau$-tilting
modules over factor algebras and we show Theorem \ref{1.3} and
Corollary \ref{1.4}.

Throughout this paper, we denote by $K$ an algebraically closed
field. All algebras are basic connected finite dimensional
$K$-algebras and all modules are finitely generated right modules.
For an algebra $A$, we denote by
 $\mod A$ the category of finitely generated right $A$-modules.
The composition of homomorphisms $f:X \rightarrow Y$ and $g:Y
\rightarrow Z$ is denoted by $gf:X \rightarrow Z$.

\section{Preliminaries}

We start with the following fundamental theorem due to
Auslander-Reiten.

\begin{theorem}\cite[Theorem 5.5]{AuR1}\label{cotorsion pair}
Let $\Lambda$ be a finite dimensional algebra and $n\ge0$.
Then there exist bijections between the following objects given by $T\mapsto\XX={}^\bot(T^\bot)$ and $T\mapsto\YY=T^\bot$.
\begin{enumerate}[\rm(1)]
\item $T\in\tilt_n\Lambda$.
\item Contravariantly finite resolving subcategories $\XX$ of $\mod\Lambda$ contained in $\PP_n(\Lambda)$.
\item Covariantly finite coresolving subcategories $\YY$ of $\mod\Lambda$ containing $\Omega^{-n}(\Lambda)$.
\end{enumerate}
Moreover, in this case, $(\XX,\YY)$ is a cotorsion pair such that $\XX\cap\YY=\add T$.
\end{theorem}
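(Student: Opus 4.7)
The plan is to establish the two bijections (1)$\leftrightarrow$(2) and (1)$\leftrightarrow$(3) simultaneously, since the forward map $T\mapsto(\XX,\YY)=({}^\bot(T^\bot),\,T^\bot)$ is common to both. The strategy is to verify on the one hand that the data $(\XX,\YY)$ associated to a tilting module has the required closure properties and finiteness properties, and on the other hand to reconstruct a tilting module $T$ from either $\XX$ or $\YY$ via an approximation procedure based on Wakamatsu's lemma.

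Starting with $T\in\tilt_n\Lambda$, I would first check that $\YY=T^\bot$ is coresolving: it contains all injectives trivially, is closed under extensions, cokernels of monomorphisms between its objects, and summands, by the long exact sequence for $\Ext_\Lambda^\ast(T,-)$. The containment $\Omega^{-n}\Lambda\in\YY$ follows from $\Ext^{i}_\Lambda(T,\Omega^{-n}\Lambda)\cong\Ext^{i+n}_\Lambda(T,\Lambda)=0$ for $i\ge1$, using $\pd T\le n$. Dually, $\XX={}^\bot\YY$ is resolving and contained in $\PP_n(\Lambda)$: any $X\in\XX$ has $\Ext^{>0}(X,Y)=0$ for all $Y\in\YY$, and combined with the fixed $\add T$-coresolution $0\to\Lambda\to T^0\to\cdots\to T^n\to0$ of $\Lambda$, a dimension-shift argument forces $\pd_\Lambda X\le n$. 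Covariant finiteness of $\YY$ is produced from this same coresolution: for any $M\in\mod\Lambda$, splicing projective resolutions against the coresolution of $\Lambda$ yields an exact sequence $0\to M\to Y\to X\to0$ with $Y\in\YY$ and $X\in\XX$, and $M\to Y$ is a left $\YY$-approximation. Contravariant finiteness of $\XX$ is constructed dually from the same sequence. The cotorsion-pair identity $\XX\cap\YY=\add T$ follows because any $M\in\XX\cap\YY$ is a direct summand of $T^0$ in such a sequence, so belongs to $\add T$.

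For the inverse direction, suppose $\XX$ is a contravariantly finite resolving subcategory contained in $\PP_n(\Lambda)$. I would iterate minimal right $\XX$-approximations on the injective coresolution of $\Lambda$ (or dually) to produce, by Wakamatsu's lemma, exact sequences whose kernels and cokernels lie in $\XX\cap\XX^\bot$. The hypothesis $\XX\subseteq\PP_n(\Lambda)$ guarantees this process terminates after at most $n$ steps, yielding a finite $\add T$-coresolution of $\Lambda$ for a module $T$ with $\add T=\XX\cap\XX^\bot$, and with $\Ext^{>0}(T,T)=0$. Together these show $T$ is tilting with $\pd T\le n$. One then checks that $\XX={}^\bot(T^\bot)$ by noting that $T\in\XX$ forces ${}^\bot(T^\bot)\supseteq\XX$, while the converse comes from the $\add T$-coresolution of objects of ${}^\bot(T^\bot)$. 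The case (3)$\to$(1) is entirely dual, using covariant finiteness of $\YY$ together with $\Omega^{-n}\Lambda\in\YY$ to bound the length of approximations.

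The main obstacle is the inverse construction: showing that the subcategory $\XX\cap\XX^\bot$ is generated, as an additive subcategory, by a single basic module $T$ which is genuinely tilting of projective dimension at most $n$, and that $\XX$ is then reconstructed as ${}^\bot(T^\bot)$. The delicate point is that the iterated Wakamatsu procedure must terminate in $n$ steps, for which the hypothesis $\XX\subseteq\PP_n(\Lambda)$ is essential; this is precisely what converts the classical tilting--cotilting correspondence of Auslander--Reiten into the finite-level version stated here. Minimality of approximations and the fact that $\XX$ is closed under summands together ensure that $\XX\cap\XX^\bot$ has only finitely many indecomposables, giving the required basic $T$.
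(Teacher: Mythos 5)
The paper offers no proof of this statement: it is quoted directly from Auslander--Reiten \cite[Theorem 5.5]{AuR1}, so there is nothing internal to compare against. Your sketch follows the standard argument of that reference (and of Auslander--Buchweitz approximation theory): the forward direction by checking (co)resolving closure, the $\Ext$-vanishing dimension shifts, and completeness of the cotorsion pair via the $\add T$-coresolution of $\Lambda$; the inverse direction by Wakamatsu's lemma and iterated approximations. Two of your justifications are not quite the right ones, though both are repairable. First, to see ${}^\bot(T^\bot)\subseteq\PP_n(\Lambda)$ you should dimension-shift in the second variable: for any $M\in\mod\Lambda$ one has $\Ext^i_\Lambda(T,\Omega^{-n}M)\simeq\Ext^{i+n}_\Lambda(T,M)=0$, so $\Omega^{-n}M\in T^\bot$ and hence $\Ext^{i+n}_\Lambda(X,M)\simeq\Ext^i_\Lambda(X,\Omega^{-n}M)=0$ for $X\in{}^\bot(T^\bot)$; the coresolution of $\Lambda$ does not directly give this. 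Second, the assertion that ``minimality of approximations and closure under summands'' force $\XX\cap\XX^\bot$ to have finitely many indecomposables is not a valid argument as stated. The correct route is the one your construction already provides: the sequence $0\to\Lambda\to W^0\to\cdots\to W^{n-1}\to X^n\to0$ with $W^i\in\XX\cap\XX^\bot$ has $X^n\in\XX^\bot$ by dimension shift (using $\XX\subseteq\PP_n(\Lambda)$), so $T=X^n\oplus\bigoplus_i W^i$ is a tilting module of projective dimension at most $n$; one then proves $\XX\subseteq{}^\bot(T^\bot)$ by resolving objects of $T^\bot$ by $\add T$ inside $T^\bot$ and shifting dimensions, whence $\XX\cap\XX^\bot\subseteq{}^\bot(T^\bot)\cap T^\bot=\add T$, which gives both the additive generator and the identity $\XX={}^\bot(T^\bot)$. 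With those two repairs your outline is the standard proof.
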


In the rest, a \emph{subcategory} is always assumed to be full and closed under direct sums and direct summands.
For later application, we prepare the following observation, which is a relative version
of a well-known observation (e.g. \cite{AuS1}).

\begin{lemma}\label{two minimality}
Let $\CC$ be a subcategory of $\mod\Lambda$ which is closed under extensions,
and $\AA$ a subcategory of $\CC$.
Then the following conditions are equivalent.
\begin{enumerate}[\rm(1)]
\item Any exact sequence $0\to A\to A'\to C\to 0$ with $A,A'\in\AA$ and $C\in\CC$ splits.
\item There is no exact sequence $0\to A\to A'\to C\to 0$ with $A\in\ind\AA$, $A'\in\AA$,
$C\in\CC$ and $A\notin\add A'$.
\end{enumerate}
\end{lemma}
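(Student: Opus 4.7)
The plan is to prove the two directions separately. Direction $(1)\Rightarrow(2)$ is immediate: if a sequence of the form in (2) existed, condition (1) would force it to split, making $A$ a direct summand of $A'$ and contradicting $A\notin\add A'$.

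For the harder direction $(2)\Rightarrow(1)$, I argue by contrapositive. Suppose (1) fails and fix a non-split short exact sequence $0\to A\xrightarrow{f}A'\to C\to 0$ with $A,A'\in\AA$ and $C\in\CC$; the goal is to produce a sequence violating (2). First, decompose $A=\bigoplus_{i=1}^n A_i$ into indecomposable summands via Krull--Schmidt. For each $i$, consider the restricted sequence
\[
0\to A_i\xrightarrow{f|_{A_i}} A'\to A'/f(A_i)\to 0,
\]
which is exact. Its cokernel $A'/f(A_i)$ sits in a short exact sequence
\[
0\to \bigoplus_{j\ne i}A_j\to A'/f(A_i)\to C\to 0,
\]
and since both ends lie in $\CC$ (using $\AA\subseteq\CC$), extension-closedness of $\CC$ yields $A'/f(A_i)\in\CC$. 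Thus the restricted sequence has $A_i\in\ind\AA$, $A'\in\AA$, and cokernel in $\CC$. If $A_i\notin\add A'$ for some $i$, this sequence is the desired witness violating (2).

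It remains to handle the case $A_i\in\add A'$ for every $i$, which forces $A\in\add A'$. Here I write $A'=A\oplus B$ with $B\in\AA$, and expand $f=(f_1,f_2)^T$ with $f_1\in\End_\Lambda(A)$ and $f_2\in\Hom_\Lambda(A,B)$. Non-splitness forces $f_1$ not to be invertible (otherwise the retraction $(f_1^{-1},0)$ splits the sequence). Using that $\End_\Lambda(A_i)$ is a local ring with nilpotent Jacobson radical for each indecomposable summand $A_i$, one applies Fitting's lemma on $A$: this decomposes the Krull--Schmidt data of $f_1$ into an invertible part, which contributes a split direct summand to the whole SES, and a nilpotent part, on which the SES remains non-split but has strictly smaller total dimension. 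Iterating the first step on this reduced SES---an induction on $\dim A+\dim A'$---eventually produces a non-split SES in which some indecomposable first-factor summand is not in $\add$ of the middle term, giving the required witness.

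The main obstacle is precisely this last case $A\in\add A'$: the naive indecomposable-summand reduction does not immediately yield a witness, and one must carefully cancel common Krull--Schmidt summands of $A$ inside $A'$ while preserving both non-splitness and the memberships $A,A'\in\AA$ and (via extension-closedness) the cokernel in $\CC$. The interplay between the local structure of $\End_\Lambda(A_i)$, Fitting's lemma, and the extension-closedness of $\CC$ is what makes the iterative reduction possible and ensures termination.
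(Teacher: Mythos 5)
Your direction (1)$\Rightarrow$(2) and the first reduction in (2)$\Rightarrow$(1) (restricting $f$ to an indecomposable summand $A_i$ and using extension-closedness of $\CC$ to see that the new cokernel stays in $\CC$) agree with the paper. The gap is in the remaining case, where every indecomposable summand of $A$ lies in $\add A'$, and it is twofold. First, $A\in\add A'$ does not let you write $A'=A\oplus B$: the condition $\add$ ignores multiplicities (take $A=X^{\oplus 2}$ and $A'=X\oplus U$), so the matrix decomposition $f=(f_1,f_2)^T$ with $f_1\in\End_\Lambda(A)$ is not available in general. Second, and more seriously, the Fitting-lemma reduction cannot terminate in a witness. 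After splitting off the invertible part of $f_1$ you are left with a non-split sequence $0\to A_2\to A_2\oplus B\to C\to 0$ whose first map has nilpotent $A_2$-component; now every indecomposable summand of the first term is literally a direct summand of the middle term, so your ``first step'' never produces a witness, and applying Fitting again to an already nilpotent endomorphism gives no further dimension drop. The induction stalls exactly in the hard case without ever producing a sequence violating (2).

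The paper resolves precisely this case by a different device. After reducing to a radical monomorphism $g:X\to A'=X^{\oplus\ell}\oplus U$ with $X$ indecomposable and $X\notin\add U$, it forms the maps $g^i:Y^i\to Y^{i+1}$ built from copies of $g$ and identities on $U$, and composes them $m$ times, where $m$ is chosen with $\rad^m\End_\Lambda(X)=0$. Since $g$ is radical, every $X\to X$ component of the composite is a product of $m$ radical endomorphisms and hence vanishes, yielding a monomorphism $h':X\to U^{\oplus N}$ whose cokernel is a direct summand of an iterated extension of copies of $\Coker g$ and therefore lies in $\CC$; as $X\notin\add U$, this exact sequence contradicts (2). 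This composition-plus-nilpotence construction is the essential content of the lemma, and it is exactly what your proposal is missing.
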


\begin{proof}
It suffices to prove (2)$\Rightarrow$(1).
Assume that there exists a non-split exact sequence
$0\to A\xrightarrow{f} A'\to C\to0$ with $A,A'\in\AA$ and $C\in\CC$.
Without loss of generality, we can assume that $f$ is in the radical of $\mod\Lambda$.

Take an indecomposable direct summand $X$ of $A$.
Let $\iota:X\to A$ be the inclusion and $g=f\iota$.
Then we have the following commutative diagram of non-split exact sequences.
\[\xymatrix@R1em{
0\ar[r]&X\ar[r]^g\ar[d]^\iota&A'\ar[r]\ar@{=}[d]&C'\ar[r]\ar[d]&0\\
0\ar[r]&A\ar[r]^f&A'\ar[r]&C\ar[r]&0\\
}\]
Since $0\to X\xrightarrow{\iota}A\to C'\to C\to0$ is an exact sequence, $C'$ belongs to $\CC$.

Decompose $A'=X^{\oplus\ell}\oplus U$ with $X\notin\add U$, and write
$g:X\to A'=X^{\oplus\ell}\oplus U$.
Let $Y^0=X$, $Y^i=X^{\oplus\ell^i}\oplus U^{\oplus(1+\ell+\cdots+\ell^{i-1})}$ and
\[g^i=g^{\oplus\ell^i}\oplus 1_U^{\oplus(1+\ell+\cdots+\ell^{i-1})}:Y^i\to Y^{i+1}.\]
Clearly $\Ker g^i=0$ and $\Coker g^i\in\CC$ holds for any $i\ge0$.

Take $m>0$ such that $\rad^m\End_\Lambda(X)=0$. Then the composition
\[h=g^{m-1}\cdots g^1g^0:X\to Y^m=X^{\oplus\ell^i}\oplus U^{\oplus(1+\ell+\cdots+\ell^{i-1})}\]
is a direct sum of morphisms $0\to X^{\oplus\ell^m}$ and
$h':X\to U^{\oplus(1+\ell+\cdots+\ell^{i-1})}$. Thus we have an exact sequence
\begin{equation}\label{X unnecessary}
0\to X\to U^{\oplus(1+\ell+\cdots+\ell^{i-1})}\to\Coker h'\to0.
\end{equation}
Since $\Coker h$ is an extension of $\Coker g^i$'s, it belongs to $\CC$.
Thus $\Coker h'$ also belongs to $\CC$. This is contradiction to the condition (2).
\end{proof}
Let $\CC$ be a subcategory of $\mod\Lambda$ which is closed under extensions.
A \emph{cogenerator} for $\CC$ is a subcategory $\AA$ of $\CC$ such that,
for any $C\in\CC$, there exists an exact sequence $0\to C\to A\to C'\to0$
with $A\in\AA$ and $C'\in\CC$.
A cogenerator $\AA$ of $\CC$ is called \emph{minimal} if no proper subcategory of $\AA$
is a cogenerator of $\CC$.

The following observation is a relative version of a well-known result in
Auslander-Smalo's theory on (co)covers \cite{AuS1}.

\begin{proposition}\label{cover}
Let $\CC$ be a subcategory of $\mod\Lambda$ which is closed under extensions.
If $\AA$ is a minimal cogenerator for $\CC$, then $\Ext^1_\Lambda(\CC,\AA)=0$ holds.
\end{proposition}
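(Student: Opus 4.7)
The plan is to deduce the proposition from Lemma \ref{two minimality} by showing that minimality of $\AA$ forces condition (2) of that lemma to hold, and then exploit the cogenerator property to reduce an arbitrary extension of an object of $\AA$ by an object of $\CC$ to a sequence of the type treated by Lemma \ref{two minimality}(1).

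First I would verify that condition (2) of Lemma \ref{two minimality} is forced by minimality, by contraposition. Suppose there exists an exact sequence
\[0\to A\to A'\to C\to0\]
with $A\in\ind\AA$, $A'\in\AA$, $C\in\CC$ and $A\notin\add A'$. I claim the full additive subcategory $\AA'\subsetneq\AA$ generated by $\ind\AA\setminus\{A\}$ is still a cogenerator, contradicting minimality. Given any $C_0\in\CC$, choose $0\to C_0\to\tilde A\to C_1\to0$ with $\tilde A\in\AA$ and $C_1\in\CC$, and decompose $\tilde A=A^{\oplus k}\oplus U$ with $A\notin\add U$. Taking $k$ copies of the bad sequence and adding the identity on $U$ yields
\[0\to\tilde A\to(A')^{\oplus k}\oplus U\to C^{\oplus k}\to0.\]
Splicing with the first sequence gives a new sequence
\[0\to C_0\to(A')^{\oplus k}\oplus U\to Q\to0,\]
and the third isomorphism theorem embeds $Q$ in a short exact sequence $0\to C_1\to Q\to C^{\oplus k}\to0$, so $Q\in\CC$ by extension-closedness. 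Since $A\notin\add A'$ and $A\notin\add U$, the middle term lies in $\AA'$, proving the claim.

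Having established condition (2), Lemma \ref{two minimality} gives condition (1): every exact sequence $0\to A\to A'\to C\to0$ with $A,A'\in\AA$ and $C\in\CC$ splits. Now take any $A\in\AA$, $C\in\CC$ and any extension
\[0\to A\to E\xrightarrow{\pi}C\to0.\]
Since $A,C\in\CC$ and $\CC$ is extension closed, $E\in\CC$. Apply the cogenerator property to $E$ to obtain $0\to E\xrightarrow{\iota}A'\to C'\to0$ with $A'\in\AA$ and $C'\in\CC$. The composite $A\hookrightarrow E\xrightarrow{\iota}A'$ is injective with cokernel $B=A'/A$ fitting into $0\to C\to B\to C'\to0$, and extension-closedness again gives $B\in\CC$. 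Condition (1) splits the sequence $0\to A\to A'\to B\to0$, and any retraction $r\colon A'\to A$ composes with $\iota$ to give a retraction $r\iota\colon E\to A$ of the inclusion $A\to E$. Hence the original extension splits, proving $\Ext^1_\Lambda(C,A)=0$.

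The main obstacle is the first paragraph, where one has to produce a cogenerator strictly smaller than $\AA$ from a single bad sequence; the splicing construction is the essential trick and is what reduces the proposition to the purely formal Lemma \ref{two minimality}. Once that is in place, the rest is an unwinding of definitions together with one standard application of the third isomorphism theorem.
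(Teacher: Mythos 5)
Your proof is correct and follows essentially the same route as the paper: embed the middle term of a given extension into $\AA$ using the cogenerator property, observe the resulting cokernel lies in $\CC$ by extension-closedness, and split via Lemma \ref{two minimality}. The only difference is that you explicitly verify (by the splicing construction) that minimality of $\AA$ forces condition (2) of Lemma \ref{two minimality}, a step the paper asserts without proof; your argument for it is valid.
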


\begin{proof}
Since $\AA$ is a minimal cogenerator for $\CC$, the conditions (1) and (2) in Lemma \ref{two minimality} are satisfied. Otherwise, there is an exact sequence $0\to A\to A'\to C\to 0$ with $A\in\ind\AA$, $A'\in\AA$, $C\in\CC$ and $A\notin\add A'$.
Then the subcategory $\AA'$ of $\AA$ defined by $\ind\AA'=(\ind\AA)\setminus\{A\}$ is a cogenerator for $\CC$, a contradiction to the minimality of $\AA$.

Let $A\in\AA$ be indecomposable. To prove $\Ext^1_\Lambda(\CC,A)=0$,
we take an exact sequence $0\to A\to C'\to C\to0$ with $C\in \CC$ and $A\in\AA$.
Since $C'\in \CC$, there exists an exact sequence $0\to C'\to A'\to C''\to0$
with $C''\in\CC$ and $A'\in\AA$. We have the following commutative
diagram of exact sequences
\[\xymatrix@R1em{
&&0\ar[d]&0\ar[d]\\
0\ar[r]&A\ar[r]\ar@{=}[d]&C'\ar[r]\ar[d]&C\ar[r]\ar[d]&0\\
0\ar[r]&A\ar[r]&A'\ar[r]\ar[d]&X\ar[r]\ar[d]&0\\
&&C''\ar[d]\ar@{=}[r]&C''\ar[d]\\
&&0&0
}\]
By the right vertical sequence, $X$ belongs to $\CC$.
Thus the middle horizontal sequence splits by Lemma \ref{two minimality}(1).
Therefore the upper horizontal sequence splits, as desired.
\end{proof}

We need the following result on mutation of tilting modules.

\begin{proposition}\cite{HaU2,CHU,AiI}\label{tilting mutation}
For a basic tilting module $T=X\oplus U$ with $X$ indecomposable, if
there is an exact sequence $0\to X\stackrel{f}{\to}U'\to Y\to0$ such that $U'\in\add U$
and $f$ is a minimal left $\add U$-approximation of $X$, then $V=Y\oplus U$ is a basic tilting module such that $V<T$.
\end{proposition}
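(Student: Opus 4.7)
The plan is to verify the four defining properties of a tilting module for $V=Y\oplus U$, and then deduce $V<T$ from $\Ext^i_\Lambda(T,V)=0$ combined with $\add V\neq\add T$. Finite projective dimension $\pd_\Lambda V<\infty$ is immediate from the mutation sequence $0\to X\to U'\to Y\to 0$ together with $X,U'\in\add T$. For the vanishing $\Ext^i_\Lambda(V,V)=0$ for $i\geq 1$, I would split it into the four pieces $\Ext^i_\Lambda(U,U)$, $\Ext^i_\Lambda(Y,U)$, $\Ext^i_\Lambda(U,Y)$, $\Ext^i_\Lambda(Y,Y)$. The first is known from $T\in\tilt\Lambda$. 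The second comes from applying $\Hom_\Lambda(-,U)$ to the mutation sequence: the approximation property of $f$ makes $\Hom_\Lambda(U',U)\to\Hom_\Lambda(X,U)$ surjective, so $\Ext^1_\Lambda(Y,U)=0$, and the higher vanishings are automatic from the long exact sequence. The third follows by applying $\Hom_\Lambda(U,-)$ and using the tilting hypotheses for $T$. For the fourth, first observe $\Ext^j_\Lambda(Y,X)=0$ for $j\geq 2$ by applying $\Hom_\Lambda(-,X)$ to the mutation sequence, then apply $\Hom_\Lambda(Y,-)$ and use $\Ext^i_\Lambda(Y,U')=0$ (which follows from the second case since $U'\in\add U$).

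Next I would build a coresolution $0\to\Lambda\to V^0\to\cdots\to V^m\to 0$ with $V^i\in\add V$. Starting from the tilting coresolution of $\Lambda$ by $\add T$, I would use the mutation sequence as a two-term $\add V$-coresolution of $X$ and splice it in successively (a horseshoe-style argument) to produce the desired coresolution of $\Lambda$ by $\add V$.

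For basicness and the strict inequality $V<T$, the key input is the \emph{minimality} of $f$. First, $X\notin\add U$, since otherwise $\mathrm{id}_X$ would factor through $f$, forcing $f$ to be split monomorphic and $Y$ to be a summand of $U'$, which is incompatible with $V$ being tilting by a count of non-isomorphic simples. A more careful argument of the type used in Lemma \ref{two minimality}, applied inside the extension-closed subcategory $U^\bot$ (which contains $X$, $U'$ and $Y$ by the Ext-vanishing already established), then shows that no indecomposable summand of $Y$ lies in $\add U$ and that $Y$ itself is basic; hence $V$ is basic and $\add V\neq\add T$. To establish $T\geq V$, one verifies $\Ext^i_\Lambda(X,Y)=0$ by applying $\Hom_\Lambda(X,-)$ to the mutation sequence together with $\Ext^j_\Lambda(X,T)=0$ for $j\geq 1$; combined with the vanishings above this gives $\Ext^i_\Lambda(T,V)=0$ for all $i\geq 1$, hence $V\leq T$, and so $V<T$.

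The main obstacle I anticipate is the basicness step. Showing that $Y$ is basic and contains no summand from $\add U$ really does use the minimality of $f$ (not merely its approximation property), and the cleanest way to package this seems to be via the argument scheme already developed in Lemma \ref{two minimality}; the other steps are essentially formal manipulations with long exact sequences.
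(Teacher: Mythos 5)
The paper offers no proof of this proposition at all---it is quoted from \cite{HaU2,CHU,AiI}---so there is no in-paper argument to compare against; I can only assess your proof on its own terms. Your treatment of the Ext-vanishing (the four-block decomposition of $\Ext^i_\Lambda(V,V)$), the spliced $\add V$-coresolution of $\Lambda$, and the deduction of $T\ge V$ from $\Ext^i_\Lambda(T,V)=0$ are all correct and are the standard route.

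The genuine gap is exactly where you flag it: the basicness of $V$. Lemma \ref{two minimality} cannot deliver this. That lemma is a splitting criterion for short exact sequences with outer terms in a subcategory $\AA$ (used to prove Ext-injectivity of a minimal cogenerator); it says nothing about how the cokernel of a left approximation decomposes, and ``applying it inside $U^\bot$'' does not produce the statement that $Y$ is multiplicity-free. The standard argument needs two inputs you do not supply. First, Miyashita's theorem that a tilting module of finite projective dimension has exactly $|\Lambda|$ pairwise non-isomorphic indecomposable summands: applied to $V=Y\oplus U$ it gives $Y\cong Z^{a}\oplus U''$ with $Z$ a single new indecomposable and $U''\in\add U$; then left minimality of $f$ kills $U''$, because $\Ext^1_\Lambda(U'',X)=0$ forces the extension class to vanish on the $U''$-component, so a nonzero $U''$ would split off a superfluous direct summand of $U'$ on which $f$ is zero. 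Second, to get $a=1$ one observes that $g\colon U'\to Y$ is a right $\add U$-approximation (since $\Ext^1_\Lambda(U,X)=0$) which is right minimal (a nonzero direct summand of $U'$ contained in $\Ker g\cong X$ would be a direct summand of the indecomposable $X$, hence isomorphic to $X$, contradicting $X\notin\add U$); since minimal right approximations are additive and unique up to isomorphism, $g$ is the $a$-fold direct sum of the minimal right $\add U$-approximation of $Z$, so $\Ker g$ is an $a$-th power of a nonzero module, and indecomposability of $X$ forces $a=1$. Without these two ingredients the claims that $V$ is basic and that $\add V\neq\add T$ (which you also need for strictness of $V<T$, although the latter can be obtained more cheaply from $\Ext^1_\Lambda(Y,X)\neq0$) remain unproved.
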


\section{Minimal tilting modules and the category $\PP_n(\Lambda)$}

\subsection{Characterizations of existence of minimal tilting modules}

Throughout this section, let $\Lambda$ be an arbitrary algebra. We focus on the properties of tilting modules in
$\tilt_n\Lambda$ and give some equivalent conditions to the
existence of minimal elements in $\tilt_n\Lambda$. More precisely,
we generalize Happel-Unger theorem stating that $\tilt\Lambda$ has a
minimal element if and only if $\PP_\infty(\Lambda)$ is
contravariantly finite (see \cite[Theorem 3.3]{HaU2}).
 Now we connect the existence of a minimal element in $\tilt_n\Lambda$ with
the contravariantly finiteness of $\PP_n(\Lambda)$ and show our
main result below. Note that the equivalence of (1) and (5)
for $n=\infty$ recovers \cite[Theorem 3.3]{HaU2}, and the
equivalence of (3) and (5) for integer $n$ recovers \cite[Corollary
2.3]{HaU1}.

\begin{theorem}\label{2.12} Let $\Lambda$ be an algebra, and let $n$ be $\infty$ or a non-negative
integer. Then the following are equivalent:
\begin{enumerate}[\rm(1)]
\item $\tilt_n\Lambda$ has a minimal element.

\item $\tilt_n\Lambda$ has the minimum element.

\item There exists $T\in\tilt_n\Lambda$ such that ${}^\bot T\supseteq\PP_n(\Lambda)$.

\item There exists $T\in\tilt_n\Lambda$ such that $^{\bot}({T^{\bot}})=\PP_n(\Lambda)$.

\item The subcategory $\PP_n(\Lambda)$ is contravariantly finite.
\end{enumerate}
\end{theorem}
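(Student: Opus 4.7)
The plan is to prove the five conditions equivalent via the cycle $(5)\Rightarrow(4)\Rightarrow(3)\Rightarrow(2)\Rightarrow(1)\Rightarrow(5)$, with the Auslander--Reiten bijection of Theorem~\ref{cotorsion pair} as the main technical engine.

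For the easy chunk $(5)\Rightarrow(4)\Rightarrow(3)\Rightarrow(2)\Rightarrow(1)$, I would proceed as follows. For $(5)\Rightarrow(4)$, I first observe that $\PP_n(\Lambda)$ is always a resolving subcategory of $\mod\Lambda$ (it contains projectives and is closed under extensions, summands, and kernels of epimorphisms); under the contravariant-finiteness hypothesis it therefore qualifies as $\XX$ in Theorem~\ref{cotorsion pair}(2), and the theorem returns a $T\in\tilt_n\Lambda$ with $^\bot(T^\bot)=\PP_n(\Lambda)$. For $(4)\Rightarrow(3)$, I just use $T\in T^\bot$ to conclude $^\bot(T^\bot)\subseteq{}^\bot T$. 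The conceptual step is $(3)\Rightarrow(2)$: every $U\in\tilt_n\Lambda$ has $\pd_\Lambda U\le n$, so $U\in\PP_n(\Lambda)\subseteq{}^\bot T$, which yields $\Ext^i_\Lambda(U,T)=0$ for all $i\ge 1$, i.e.\ $T^\bot\supseteq U^\bot$, i.e.\ $T\le U$ in the tilting partial order; hence $T$ is the minimum. The implication $(2)\Rightarrow(1)$ is tautological.

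The main obstacle is $(1)\Rightarrow(5)$. Starting from a minimal $T\in\tilt_n\Lambda$, I would set $\XX_T:={}^\bot(T^\bot)$. By Theorem~\ref{cotorsion pair} this is a contravariantly finite resolving subcategory of $\PP_n(\Lambda)$, so it suffices to prove the equality $\XX_T=\PP_n(\Lambda)$. I plan to argue by contradiction, assuming some $M\in\PP_n(\Lambda)\setminus\XX_T$ exists, and producing via Proposition~\ref{tilting mutation} a tilting module $V<T$ lying in $\tilt_n\Lambda$. Contravariant finiteness of $\XX_T$ supplies a minimal right $\XX_T$-approximation giving a short exact sequence $0\to K\to A\to M\to 0$ with $A\in\XX_T$ and $K\in T^\bot$ (the latter by the cotorsion-pair structure of Theorem~\ref{cotorsion pair}); the failure $M\notin\XX_T$ forces $K\ne 0$, and I would like to transfer this obstruction onto the existence of an indecomposable summand $X$ of $T$ whose minimal left $\add(T/X)$-approximation $X\to U'$ is injective with cokernel $Y$. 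Proposition~\ref{tilting mutation} would then deliver a basic tilting module $V=Y\oplus(T/X)<T$, contradicting minimality of $T$.

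The subtle point, and where I expect the main work to lie, is certifying $V\in\tilt_n\Lambda$ rather than merely $V\in\tilt\Lambda$: a priori Proposition~\ref{tilting mutation} only controls membership in $\tilt\Lambda$, and a single mutation can raise projective dimension by one. The assumption $M\in\PP_n(\Lambda)$, combined with the resolving nature of $\XX_T$ applied to the approximation sequence $0\to K\to A\to M\to 0$, is precisely what should bound $\pd_\Lambda Y\le n$, keeping $V$ inside $\tilt_n\Lambda$; the delicate bookkeeping of projective dimensions across the mutation triangle is the step I expect to require the most care.
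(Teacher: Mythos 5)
Your chain $(5)\Rightarrow(4)\Rightarrow(3)\Rightarrow(2)\Rightarrow(1)$ is correct and is essentially what the paper does: $(5)\Rightarrow(4)$ via the Auslander--Reiten correspondence of Theorem \ref{cotorsion pair}, $(4)\Rightarrow(3)$ from $T\in T^\bot$, and $(3)\Rightarrow(2)$ from $U\in\PP_n(\Lambda)\subseteq{}^\bot T$ for every $U\in\tilt_n\Lambda$. The problem is $(1)\Rightarrow(5)$, which is where the entire content of the theorem sits and where your outline stops at exactly the decisive step. You take $M\in\PP_n(\Lambda)\setminus{}^\bot(T^\bot)$, form the special approximation sequence $0\to K\to A\to M\to 0$ with $A\in{}^\bot(T^\bot)$ and $K\in T^\bot$, and then say you ``would like to transfer this obstruction'' to an indecomposable summand $X$ of $T$ whose minimal left $\add(T/X)$-approximation is a monomorphism with cokernel in $\PP_n(\Lambda)$. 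No mechanism for this transfer is given, and none is visible from that single sequence: the existence of a nonzero $K$ tells you nothing directly about any particular summand of $T$. This is a genuine missing idea, not delicate bookkeeping.

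For comparison, the paper proves the contrapositive through two auxiliary results you would need substitutes for. If no such monomorphism $X\to T'$ exists (condition (7) of Proposition \ref{characterizations of minimal}), then Lemma \ref{two minimality} (an iterated-composition argument using $\rad^m\End_\Lambda(X)=0$) upgrades this to: every exact sequence $0\to T_1\to T_0\to C\to 0$ with $T_i\in\add T$ and $C\in\PP_n(\Lambda)$ splits. A resolution argument then gives $\PP_n(\Lambda)\cap T^\bot=\add T$, so $\add T$ is a minimal cogenerator for $\PP_n(\Lambda)$, whence Proposition \ref{cover} yields $\Ext^1_\Lambda(\PP_n(\Lambda),T)=0$, and two rounds of dimension shifting give ${}^\bot(T^\bot)\supseteq\PP_n(\Lambda)$. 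So minimality forces $(5)$ through $(7)\Rightarrow(6)\Rightarrow\cdots\Rightarrow(3)$, and the mutation of Proposition \ref{tilting mutation} is only used once, to show $(1)\Rightarrow(7)$. A smaller inaccuracy: the bound $\pd_\Lambda Y\le n$ for the mutated module does not come from the approximation sequence of $M$ as you suggest, but from extension-closure of $\PP_n(\Lambda)$ applied to $0\to U'\to T'\oplus Y\to\Coker f\to 0$, where $\Coker f\in\PP_n(\Lambda)$ is part of the hypothesis being contradicted. You have the right skeleton and the right tools in hand, but the bridge from $\neg(5)$ to a mutation is the theorem, and it still needs to be built.
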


To prove Theorem \ref{2.12}, we need the following result.

\begin{proposition}\label{characterizations of minimal}
Let $\Lambda$ be an algebra, and let $\CC$ be a resolving
subcategory of  $\mod\Lambda$ contained in $\PP_{\infty}(\Lambda)$.
For $T\in\CC\cap\tilt\Lambda$, the following conditions are
equivalent.
\begin{enumerate}[\rm(1)]
\item $T$ is a minimal element in $\CC\cap\tilt\Lambda$.

\item $T$ is the minimum element in $\CC\cap\tilt\Lambda$.

\item $^{\bot}({T^{\bot}})=\CC$.

\item ${}^\bot T\supseteq\CC$.
\item $\CC\cap T^{\bot}=\add T$.

\item Every exact sequence $0\to T_1\to T_0\to C\to0$ with $T_i\in\add T$ and
$C\in\CC$ splits.

\item There is no monomorphism $f:X\to T'$ such that $X$ is an indecomposable
direct summand of $T$, $T'\in\add(T/X)$ and $\Coker f\in\CC$.
\end{enumerate}
\end{proposition}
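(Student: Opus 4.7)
The plan is to prove the equivalence via the cycle $(2){\Rightarrow}(1)$, $(4){\Rightarrow}(2)$, $(3){\Leftrightarrow}(4)$, $(3){\Rightarrow}(5){\Rightarrow}(4)$, $(5){\Rightarrow}(7){\Leftrightarrow}(6){\Rightarrow}(5)$, and finally $(1){\Rightarrow}(7)$, closing the loop. Throughout, a key preliminary observation is that $\XX:={}^{\bot}(T^{\bot})\subseteq\CC$: every $X\in\XX$ has a finite $\add T$-coresolution $0\to X\to T^{0}\to\cdots\to T^{n}\to 0$ (obtained by iterating the cotorsion-pair approximations from Theorem~\ref{cotorsion pair}, using $\XX\cap T^{\bot}=\add T$), and the resolving closure of $\CC$ applied termwise, starting from $T^{n}\in\add T\subseteq\CC$, forces $X\in\CC$. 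The ``static'' implications among (3)--(7) rest on Theorem~\ref{cotorsion pair} and Lemma~\ref{two minimality}; the substantive step is the ``dynamic'' implication $(1){\Rightarrow}(7)$, which invokes Proposition~\ref{tilting mutation}.

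The static directions go as follows. $(2){\Rightarrow}(1)$ is immediate, and $(4){\Rightarrow}(2)$ holds because any $U\in\CC\cap\tilt\Lambda$ lies in $\CC\subseteq{}^{\bot}T$, forcing $\Ext^{i}(U,T)=0$ and hence $T\le U$. $(3){\Leftrightarrow}(4)$: the inclusion ${}^{\bot}(T^{\bot})\subseteq{}^{\bot}T$ is automatic (since $T\in T^{\bot}$), and the reverse ${}^{\bot}T\supseteq\CC$ implies ${}^{\bot}(T^{\bot})\supseteq\CC$ by dimension-shifting along an $\add T$-resolution of modules in $T^{\bot}$. $(3){\Rightarrow}(5)$ is immediate from $\XX\cap T^{\bot}=\add T$. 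For $(5){\Rightarrow}(4)$: apply the cotorsion pair to $C\in\CC$ to get $0\to C\to Y^{C}\to X^{C}\to 0$ with $X^{C}\in\XX\subseteq\CC$, so $Y^{C}\in\CC\cap T^{\bot}=\add T$ by (5), and the long exact $\Ext$-sequence yields $\Ext^{i}(C,T)=0$. For $(5){\Rightarrow}(7)$: failure of (7) produces $0\to X\to T'\to C\to 0$, giving $C\in T^{\bot}$ via $\Hom(T,-)$, hence $C\in\add T$ by (5); then $\Ext^{1}(C,X)=0$ makes $\Hom(T',X)\to\Hom(X,X)$ surjective, so $1_{X}$ factors through $f$, contradicting $X\notin\add T'$. $(6){\Leftrightarrow}(7)$ is Lemma~\ref{two minimality} with $\AA=\add T$. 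For $(6){\Rightarrow}(5)$: any $N\in\CC\cap T^{\bot}$ admits a finite $\add T$-resolution $0\to T_{d}\to\cdots\to T_{0}\to N\to 0$ of length $d=\pd N<\infty$ (the tilting equivalence $\Hom(T,-)\colon T^{\bot}\to\mod\End T$ preserves projective dimension); the syzygies lie in $\CC\cap T^{\bot}$ by resolving-closure, and since $T_{d}\in\add T$, (6) applied inductively to the short exact pieces from $i=d$ down propagates $\add T$-membership back to $N$.

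The main work is $(1){\Rightarrow}(7)$, by contrapositive. Assume (7) fails and fix an indecomposable summand $X$ of $T=X\oplus U$, an injection $f\colon X\to T'$ with $T'\in\add U$, and $C=\Coker f\in\CC$. Let $g\colon X\to U'$ be the minimal left $\add U$-approximation; the factorization $f=hg$ (for some $h\colon U'\to T'$, by the approximation property) forces $g$ to be injective. Setting $Y=\Coker g$, Proposition~\ref{tilting mutation} gives a basic tilting module $V=Y\oplus U$ with $V<T$. To contradict the minimality of $T$ in $\CC\cap\tilt\Lambda$ I must show $V\in\CC$, i.e., $Y\in\CC$. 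Form the pushout $P$ of $g$ and $f$ along $X$. Since $f$ is injective, the pushout yields $0\to U'\to P\to C\to 0$; with $U',C\in\CC$ and $\CC$ extension-closed, $P\in\CC$. Since $g$ is injective, the pushout also yields $0\to T'\to P\to Y\to 0$. Applying $\Hom(-,T')$ to $0\to X\to U'\to Y\to 0$ and using that $g$ is a left $\add U$-approximation with $T'\in\add U$ makes $\Hom(U',T')\to\Hom(X,T')$ surjective, whence $\Ext^{1}(Y,T')=0$; thus $0\to T'\to P\to Y\to 0$ splits, $P\cong Y\oplus T'$, and $Y$ is a direct summand of $P\in\CC$, so $Y\in\CC$.

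The main obstacle is precisely this final step. Since $\CC$ is only \emph{resolving} --- closed under kernels of epimorphisms, not cokernels of monomorphisms --- one cannot deduce $Y\in\CC$ directly from $0\to T'\to P\to Y\to 0$ even after knowing $T',P\in\CC$. The pushout trick converts the hypothesis $C\in\CC$ into $P\in\CC$, while the splitting (resting on $g$'s left-approximation property) turns the cokernel question into a summand question, which $\CC$ respects.
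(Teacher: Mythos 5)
Your proof is correct, and while the substantive step $(1)\Rightarrow(7)$ is essentially the paper's argument (mutation at $X$ via Proposition~\ref{tilting mutation}, then showing the new complement $Y$ lies in $\CC$), you wire the remaining implications differently and in one place more cleanly. The paper proves $(5)+(7)\Rightarrow(4)$ by showing $\add T$ is a \emph{minimal} cogenerator for $\CC$ and invoking Proposition~\ref{cover}; your $(5)\Rightarrow(4)$ bypasses both the minimality and Proposition~\ref{cover} entirely, by feeding the cotorsion-pair sequence $0\to C\to Y^C\to X^C\to0$ into the long exact sequence for $\Hom(-,T)$ and squeezing $\Ext^i(C,T)$ between $\Ext^i(Y^C,T)=0$ and $\Ext^{i+1}(X^C,T)=0$ --- a genuine simplification. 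You also add the direct arrows $(5)\Rightarrow(7)$ and $(4)\Rightarrow(2)$ (the paper routes the latter through $(3)$), both correctly. In $(1)\Rightarrow(7)$ you reach $Y\in\CC$ via the pushout $P$ plus a splitting of $0\to T'\to P\to Y\to0$; the paper gets there faster using the comparison exact sequence $0\to U'\to T'\oplus Y\to\Coker f\to0$, which holds with no splitting needed and hands you $T'\oplus Y\in\CC$ by extension-closure alone --- your extra splitting step is sound but unnecessary. One small caveat: in $(6)\Rightarrow(5)$ your parenthetical justification that ``the tilting equivalence $\Hom(T,-)$ preserves projective dimension'' is not literally true as stated; the correct reason the $\add T$-resolution inside $T^\bot$ terminates at length $d=\pd_\Lambda N$ is the dimension shift $\Ext^{i}(C_d,T^\bot)\simeq\Ext^{i+d}(N,T^\bot)=0$, which places the $d$-th syzygy in ${}^\bot(T^\bot)\cap T^\bot=\add T$ (this is exactly how the paper argues). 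The fact you need is true, so this is a flaw of justification rather than of substance.
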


\begin{proof}
Note that, for any $U\in\CC\cap\tilt\Lambda$, we have
\begin{equation}\label{XU,YU}
U^\bot\supseteq\CC^\bot\ \mbox{ and }\ {}^\bot(U^\bot)\subseteq\CC.
\end{equation}

(1)$\Rightarrow$(7) Assume that there exists such $f:X\to T'$.
Let $g:X\to U'$ be a minimal left $(\add T/X)$-approximation and $Y=\Coker g$.
Then $f$ factors throught $g$, and we have a commutative diagram of exact sequences
\[\xymatrix@R1em{
0\ar[r]&X\ar[r]^f\ar@{=}[d]&T'\ar[r]&\Coker f\ar[r]&0\\
&X\ar[r]^g&U'\ar[r]\ar[u]&Y\ar[r]\ar[u]&0 }\] Thus $\Ker g=0$, and
we have an exact sequence $0\to U'\to T'\oplus Y\to\Coker f\to0$.
Thus $Y\in\CC$ holds. This means that $U=(T/X)\oplus Y$ is a
mutation of $T$ (Proposition \ref{tilting mutation}) and gives an
element of $\CC\cap\tilt\Lambda$ such that $T>U$, a contradiction.

(7)$\Rightarrow$(6) We only need to apply Lemma \ref{two minimality}
for $\AA=\add T$.


(6)$\Rightarrow$(5) It suffices to show $\CC\cap
T^{\bot}\subseteq\add T$. Let $C_0\in\CC\cap T^{\bot}$ and
$n=\pd_\Lambda C_0$. Since $T^\bot$ is an exact category with enough
projectives $\add T$, there exists an exact sequence
\[\cdots\xrightarrow{f_2} T_1\xrightarrow{f_1} T_0\xrightarrow{f_0} C_0\to0\]
such that $T_i\in\add T$ and $C_i=\Im f_i$ belongs to $T^{\bot}$.
Since $\CC$ is resolving, each $C_i$ belongs to $\CC$. For
every $i>0$, we have
\[\Ext^{i}_\Lambda(C_n,T^\perp)\simeq\Ext^{i+1}_\Lambda(C_{n-1},T^\perp)\simeq
\cdots\simeq\Ext^{i+n}_\Lambda(C_0,T^\perp)=0.\]

Thus $C_n$ belongs to $T^\bot\cap{}^\bot(T^\bot)=\add T$ by Theorem
\ref{cotorsion pair}.

Since $C_{n-1}\in\CC$, the exact sequence $0\to C_n\to T_{n-1}\to
C_{n-1}\to0$ splits by our assumption, and hence $C_{n-1}$ belongs
to $\add T$. Repeating the same argument, we have $C_0\in\add T$, as
desired.

(5)$\Rightarrow$(1) Assume $U\in\CC\cap\tilt\Lambda$ satisfies $T\ge
U$. Then $U\in\CC\cap T^\bot=\add T$ and hence $U=T$.

(5)+(7)$\Rightarrow$(4) By Proposition \ref{cover}, it suffices to
show that $\add T$ is a minimal cogenerator for $\CC$. Let
$C\in\CC$. Since $({}^\bot(T^\bot),T^\bot)$ is a cotorsion pair by
Theorem \ref{cotorsion pair}, there exists an exact sequence $0\to
C\to Y\to X\to0$ with $Y\in T^\bot$ and $X\in{}^\bot(T^\bot)$. By
\eqref{XU,YU}, we have $X\in\CC$. Since $\CC$ is extension closed,
we have $Y\in\CC\cap T^\bot=\add T$ by (5). Thus $\add T$ is a
cogenerator for $\CC$. Moreover, it is minimal by (7).

(4)$\Rightarrow$(3) Thanks to \eqref{XU,YU}, it suffices to show
${}^\bot(T^\bot)\supseteq\CC$, i.e.\ any $X\in\CC$ and $Y_0\in
T^\bot$ satisfy $\Ext^{i}_\Lambda(X,Y_0)=0$ for all $i>0$. Since
$T^\perp$ is an exact category with enough projectives $\add T$,
there exists an exact sequence
\[\cdots\xrightarrow{f_2} T_1\xrightarrow{f_1} T_0\xrightarrow{f_0} Y_0\to0\]
such that $T_i\in\add T$ and $Y_i=\Im f_i$ belongs to $T^{\bot}$.
For $n=\pd_\Lambda X$, by using (4), we have
\[\Ext^{i}_\Lambda(X,Y_0)\simeq\Ext^{i+1}_\Lambda(X,Y_1)\simeq
\cdots\simeq\Ext^{i+n}_\Lambda(X,Y_{n})=0\] as desired.

(3)$\Rightarrow$(2) Let $U\in\CC\cap\tilt\Lambda$. By \eqref{XU,YU},
we have ${}^\bot(U^\bot)\subseteq\CC={}^\bot(T^\bot)$. Thus
$U^\bot\supseteq T^\bot$ by Theorem \ref{cotorsion pair}.

(2)$\Rightarrow$(1) Clear.
\end{proof}

Now we prove the following theorem, where the equivalence of (3) and (5) recovers \cite[Theorems
2.1, 2.2]{HaU1}.

\begin{theorem}\label{general 2.12}
Let $\Lambda$ be an algebra, and let $\CC$ be a resolving
subcategory of  $\mod\Lambda$ contained in $\PP_{\infty}(\Lambda)$.
Then the following are equivalent:
\begin{enumerate}[\rm(1)]
\item $\CC\cap\tilt\Lambda$ has a minimal element.

\item $\CC\cap\tilt\Lambda$ has the minimum element.

\item There exists $T\in\CC\cap\tilt\Lambda$ such that ${}^\bot T\supseteq\CC$.

\item There exists $T\in\CC\cap\tilt\Lambda$ such that $^{\bot}({T^{\bot}})=\CC$.

\item The subcategory $\CC$ is contravariantly finite.
\end{enumerate}
\end{theorem}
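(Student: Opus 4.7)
The plan is to establish the cycle of implications $(2)\Rightarrow(1)\Rightarrow(5)\Rightarrow(4)\Rightarrow(3)\Rightarrow(2)$, with $(1)\Rightarrow(5)$ carrying the technical weight; the remaining four implications reduce either to tautologies or to Theorem~\ref{cotorsion pair}. Indeed, $(2)\Rightarrow(1)$ is immediate. For $(5)\Rightarrow(4)$, Theorem~\ref{cotorsion pair} (in its $\PP_\infty(\Lambda)$-version) associates to any contravariantly finite resolving $\CC\subseteq\PP_\infty(\Lambda)$ a tilting module $T$ with $\CC={}^\bot(T^\bot)$, and since $T\in\CC\cap\CC^\bot$ this $T$ lies in $\CC\cap\tilt\Lambda$. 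For $(4)\Rightarrow(3)$: $\Ext^i_\Lambda(T,T)=0$ for $i\ge1$ gives $T\in T^\bot$, hence ${}^\bot(T^\bot)\subseteq{}^\bot T$, so $\CC\subseteq{}^\bot T$. For $(3)\Rightarrow(2)$: every $V\in\CC\cap\tilt\Lambda$ lies in ${}^\bot T$, so $V\ge T$ and $T$ is the minimum.

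For the core implication $(1)\Rightarrow(5)$, my plan is first to prove $(1)\Rightarrow(3)$ and then to derive $(5)$. Assuming $T\in\CC\cap\tilt\Lambda$ is minimal, I establish $\CC\subseteq{}^\bot T$ by contradiction: suppose $\Ext^i_\Lambda(M,X)\ne 0$ for some $M\in\CC$, some indecomposable summand $X$ of $T$, and some $i\ge 1$. Writing $T=X\oplus U$ and considering the minimal left $\add U$-approximation $f\colon X\to U'$, Proposition~\ref{tilting mutation} produces a basic tilting module $V=\Coker(f)\oplus U$ strictly below $T$ provided $f$ is injective; once $V\in\CC$ is verified, the minimality of $T$ is contradicted. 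The injectivity of $f$ together with $V\in\CC$ will be secured by applying Proposition~\ref{cover} to the extension-closed subcategory $\CC\cap T^\bot$ with candidate cogenerator $\add T$: the hypothetical Ext-obstruction prevents $\add T$ from being a minimal cogenerator of $\CC\cap T^\bot$, producing a superfluous indecomposable summand which can be identified, after a careful choice, with $X$; the removability criterion then forces $f$ to be injective with $\Coker(f)\in\CC$.

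Once $\CC\subseteq{}^\bot T$ is established, dimension shifting along finite $\add T$-resolutions of objects of $T^\bot$ (using $\CC\subseteq\PP_\infty(\Lambda)$) upgrades this to $\CC\subseteq{}^\bot(T^\bot)$. Combined with $T\in\CC\cap\CC^\bot$, the Auslander--Reiten / Auslander--Buchweitz machinery underlying Theorem~\ref{cotorsion pair} identifies $\add T$ with $\CC\cap\CC^\bot$ and yields $\CC={}^\bot(T^\bot)$, hence in particular $(5)$.

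The main obstacle is the mutation step. In contrast to the support $\tau$-tilting setting, classical tilting modules cannot be left-mutated unconditionally at a given summand, so one must convert the global Ext-obstruction $\Ext^i_\Lambda(M,T)\ne 0$ for $M\in\CC$ into the local mutability of a specific indecomposable summand. Carrying out this conversion through Proposition~\ref{cover} in the relative subcategory $\CC\cap T^\bot$, and simultaneously keeping the mutated cokernel inside $\CC$, is the most delicate bookkeeping task in the argument.
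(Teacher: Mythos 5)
Your outer cycle is sound: $(2)\Rightarrow(1)$ is trivial, $(5)\Rightarrow(4)$ follows from Theorem~\ref{cotorsion pair}, $(4)\Rightarrow(3)$ from $T\in T^\bot$, and $(3)\Rightarrow(2)$ from the definition of the order; these agree with the paper. The gap is in your mechanism for the core step $(1)\Rightarrow(3)$. You want the hypothetical obstruction $\Ext^i_\Lambda(M,X)\neq0$ (with $M\in\CC$) to show, via the contrapositive of Proposition~\ref{cover} applied to $\CC\cap T^\bot$, that $\add T$ fails to be a \emph{minimal} cogenerator, and then to extract a superfluous summand and hence a mutable one. This does not close, for three reasons. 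First, the negation of ``minimal cogenerator'' includes the case that $\add T$ is not a cogenerator at all, which produces no superfluous summand; to exclude that case you must already know that every object of the relevant category embeds into $\add T$ with cokernel staying inside it, and that is essentially the statement $\CC\cap T^\bot=\add T$ --- the hardest intermediate step of the whole proof, entirely absent from your sketch. Second, Proposition~\ref{cover} applied to $\CC\cap T^\bot$ only controls $\Ext^1_\Lambda(-,\add T)$ on objects of $\CC\cap T^\bot$, whereas your obstruction module $M$ merely lies in $\CC$; there is no direct passage from one to the other. Third, even in the favourable case the superfluous summand need not be the $X$ carrying the obstruction (harmless for a contradiction, but a sign that the bookkeeping you describe is not the right one).

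The workable argument runs the logic in the opposite direction, and this is what the paper does. From minimality of $T$ one first deduces, using Proposition~\ref{tilting mutation} exactly as you intend, that there is \emph{no} monomorphism $X\to T'$ with $X$ an indecomposable summand of $T$, $T'\in\add(T/X)$ and cokernel in $\CC$ (if there were, the minimal left $\add(T/X)$-approximation of $X$ would be injective with cokernel in $\CC$, yielding a strictly smaller tilting module in $\CC$). By Lemma~\ref{two minimality} this upgrades to: every exact sequence $0\to T_1\to T_0\to C\to0$ with $T_i\in\add T$ and $C\in\CC$ splits. A downward induction along an $\add T$-resolution inside the exact category $T^\bot$, using $\pd_\Lambda C<\infty$ and $T^\bot\cap{}^\bot(T^\bot)=\add T$, then gives $\CC\cap T^\bot=\add T$. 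The approximation sequences $0\to C\to Y\to X\to0$ with $Y\in T^\bot$ and $X\in{}^\bot(T^\bot)\subseteq\CC$ now show $Y\in\CC\cap T^\bot=\add T$, so $\add T$ is a cogenerator of $\CC$, minimal by the first step, and only at this point does Proposition~\ref{cover} apply to yield $\Ext^{>0}_\Lambda(\CC,T)=0$, i.e.\ $(3)$. Your final paragraph (dimension shifting along $\add T$-resolutions of objects of $T^\bot$ to get $\CC={}^\bot(T^\bot)$, hence $(5)$) is correct once $(3)$ is in hand.
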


\begin{proof}
(1)$\Leftrightarrow$(2)$\Leftrightarrow$(3)$\Leftrightarrow$(4)
These are shown in Proposition \ref{characterizations of minimal}.

(4)$\Leftrightarrow$(5) This is well-known (see Theorem \ref{cotorsion
pair}).
\end{proof}

We are ready to prove Theorem \ref{2.12}.

\begin{proof}[Proof of Theorem \ref{2.12}]
One can get the assertion by putting $\CC=\PP_n(\Lambda)$ in Theorem \ref{general 2.12}.
\end{proof}

\subsection{Minimal tilting modules of Auslander-Gorenstein algebras}

In this section, we construct a class of minimal tilting modules for
$n$-Gorenstein algebras. As a result, we show some equivalent
conditions for an $n$-Gorenstein algebra to be Iwanaga-Gorenstein,
which gives a partial answer to a question of Auslander and Reiten
mentioned before.

 Now we have the following result which gives a method
in constructing minimal tilting modules with finite projective
dimension.

\begin{theorem}\label{2.9} For an algebra $\Lambda$ and a fixed integer $n\geq0$, assume that $\pd_{\Lambda} I^i(\Lambda)\leq
n$ for any $i$, $0\leq i\leq n-1$ and
$\pd_{\Lambda}\Omega^{-n}\Lambda\leq n$. Let
$T=(\bigoplus_{i=0}^{n-1}I^i(\Lambda))\oplus\Omega^{-n}\Lambda$.
Then we have the following:
\begin{enumerate}[\rm(1)]
\item $T$ is a tilting module with projective dimension at most $n$.
\item $T$ is the minimum element in $\tilt_{n}\Lambda$.
\item $\PP_n(\Lambda)$ is contravariantly finite and $^{\bot}({T^{\bot}})=\PP_n(\Lambda)$.
\end{enumerate}
\end{theorem}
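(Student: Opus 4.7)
The plan is to handle (1) by producing an explicit tilting coresolution, and then to deduce (2) and (3) together by invoking Proposition \ref{characterizations of minimal} with $\CC=\PP_n(\Lambda)$.

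For (1), I would use the minimal injective coresolution of $\Lambda$ truncated at stage $n$, yielding the exact sequence
\[0\to\Lambda\to I^0(\Lambda)\to I^1(\Lambda)\to\cdots\to I^{n-1}(\Lambda)\to\Omega^{-n}\Lambda\to0,\]
which is an $\add T$-coresolution of $\Lambda$ of length $n$, and $\pd_\Lambda T\le n$ follows immediately from the two hypotheses. For the self-orthogonality $\Ext^i_\Lambda(T,T)=0$ with $i\ge1$, the vanishing with $I^j(\Lambda)$ in the second argument is immediate from injectivity. For $\Omega^{-n}\Lambda$ in the second argument I would invoke the dimension-shift identity
\[\Ext^i_\Lambda(M,\Omega^{-n}\Lambda)\;\simeq\;\Ext^{i+n}_\Lambda(M,\Lambda)\qquad(i\ge1),\]
obtained by iterating the short exact sequences $0\to\Omega^{-j}\Lambda\to I^j(\Lambda)\to\Omega^{-j-1}\Lambda\to0$ and using injectivity of the $I^j(\Lambda)$ to kill the intermediate $\Ext$-terms. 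Applied to $M=I^j(\Lambda)$ and $M=\Omega^{-n}\Lambda$, the right-hand side vanishes in both cases exactly because of the projective-dimension hypotheses on these modules.

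For (2) and (3), set $\CC=\PP_n(\Lambda)$. This is a resolving subcategory of $\PP_\infty(\Lambda)$, and $T\in\CC\cap\tilt\Lambda=\tilt_n\Lambda$ by (1), so Proposition \ref{characterizations of minimal} applies. I would only verify condition (4) there, namely ${}^\bot T\supseteq\PP_n(\Lambda)$: for $M\in\PP_n(\Lambda)$ and $i\ge1$, $\Ext^i_\Lambda(M,I^j(\Lambda))=0$ by injectivity, while the dimension-shift identity gives $\Ext^i_\Lambda(M,\Omega^{-n}\Lambda)\simeq\Ext^{i+n}_\Lambda(M,\Lambda)=0$ since $\pd_\Lambda M\le n$. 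The proposition then delivers (2) and the identity ${}^\bot(T^\bot)=\PP_n(\Lambda)$ at no extra cost, and the contravariant finiteness of $\PP_n(\Lambda)$ claimed in (3) follows from the equivalence (4)$\Leftrightarrow$(5) of Theorem \ref{general 2.12}.

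There is no genuine obstacle in this strategy; the entire argument rests on the single computational identity $\Ext^i_\Lambda(-,\Omega^{-n}\Lambda)\simeq\Ext^{i+n}_\Lambda(-,\Lambda)$ for $i\ge1$, which converts the two hypotheses on $\pd_\Lambda I^j(\Lambda)$ and $\pd_\Lambda\Omega^{-n}\Lambda$ simultaneously into the self-orthogonality of $T$ and the inclusion $\PP_n(\Lambda)\subseteq{}^\bot T$. Once this is in place, Proposition \ref{characterizations of minimal} and Theorem \ref{general 2.12} do all the remaining work.
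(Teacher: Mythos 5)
Your proposal is correct and follows essentially the same route as the paper: part (1) via the truncated minimal injective coresolution and the dimension-shift isomorphism $\Ext^i_\Lambda(-,\Omega^{-n}\Lambda)\simeq\Ext^{i+n}_\Lambda(-,\Lambda)$, and parts (2)--(3) via the characterization machinery of Proposition \ref{characterizations of minimal} and Theorem \ref{general 2.12}. The only cosmetic difference is that you verify ${}^\bot T\supseteq\PP_n(\Lambda)$ once and read off both (2) and (3), whereas the paper checks $T\in U^\bot$ for each $U\in\tilt_n\Lambda$ directly and then cites Theorem \ref{2.12} for (3); the underlying computation is identical.
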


\begin{proof}
We show the assertion (1) step by step.

$\bullet$ By our assumptions, $\pd_{\Lambda}T\leq n$ holds.

$\bullet$ $\Ext_{\Lambda}^{i}(T,T)=0$ for $i\geq 1$. It suffices to
show $\Ext_{\Lambda}^{i}(\Omega^{-n}\Lambda,\Omega^{-n}\Lambda)=0$
for $i\geq 1$, and
$\Ext_{\Lambda}^{i}(I^j(\Lambda),\Omega^{-n}\Lambda)=0$ for $i\geq
1$ and $1\leq j\leq n-1$.

Applying the functor $\Hom_{\Lambda}(\Omega^{-n}\Lambda,-)$ to the
following exact sequence
\begin{equation}\label{1}
\xymatrix{0\ar[r]&\Lambda\ar[r] &I^0(\Lambda)\ar[r] &\cdots\ar[r]
&I^{n-1}(\Lambda)\ar[r] &\Omega^{-n}\Lambda\ar[r]&0}
\end{equation}
one can show that
$\Ext_{\Lambda}^{i}(\Omega^{-n}\Lambda,\Omega^{-n}\Lambda)\simeq
\Ext_{\Lambda}^{i+n}(\Omega^{-n}\Lambda,\Lambda)=0$ holds for $i\geq
1$ since $\pd_{\Lambda}\Omega^{-n}\Lambda\leq n$. Applying the
functor $\Hom_{\Lambda}(I^j(\Lambda),-)$ to the exact sequence
\eqref{1}, we get that
$\Ext_{\Lambda}^{i}(I^j(\Lambda),\Omega^{-n}\Lambda)\simeq
\Ext_{\Lambda}^{i+n}(I^j(\Lambda),\Lambda)=0$ holds for $i\geq1$
since $\pd_{\Lambda}I^j(\Lambda)\leq j\leq n-1$.

 $\bullet$ The exact sequence \eqref{1} is our desired resolution
in the definition of tilting modules.

(2) It suffices to show that  $T\in U^{\bot}$ holds for any tilting
module $U$ with $\pd_{\Lambda}U\leq n$. Then by the definition of
$T$, we only have to show $\Omega^{-n}\Lambda\in U^{\bot}$. Applying
the functor $\Hom_{\Lambda}(U,-)$ to the exact sequence \eqref{1},
 one can get that
$\Ext_{\Lambda}^i(U,\Omega^{-n}\Lambda)\simeq\Ext_{\Lambda}^{n+i}(U,\Lambda)=0$
holds for $i\geq1$ since $\pd_{\Lambda}U\leq n$. Then the assertion
follows.

(3) This follows from Theorem \ref{2.12}.
\end{proof}

Immediately, we have the following corollary.

\begin{corollary}\label{2.5} Let $\Lambda$ be a quasi $n$-Gorenstein algebra with $n\geq 0$.
Then $\tilt_n\Lambda$ has the minimum element
$(\bigoplus_{i=0}^{n-1}I^i(\Lambda))\oplus\Omega^{-n}\Lambda$.
\end{corollary}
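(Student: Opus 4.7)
The plan is to deduce Corollary \ref{2.5} directly from Theorem \ref{2.9} by verifying that a quasi $n$-Gorenstein algebra satisfies the two projective-dimension hypotheses of that theorem, namely $\pd_\Lambda I^i(\Lambda)\le n$ for $0\le i\le n-1$ and $\pd_\Lambda\Omega^{-n}\Lambda\le n$. The first of these is immediate from the definition: quasi $n$-Gorenstein gives $\pd_\Lambda I^i(\Lambda)\le i+1\le n$ for $0\le i\le n-1$. So the only real work is to check the second bound.

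For the second bound, I would split the start of the minimal injective coresolution
\[
0\to\Lambda\to I^0(\Lambda)\to I^1(\Lambda)\to\cdots\to I^{n-1}(\Lambda)\to\Omega^{-n}\Lambda\to0
\]
into short exact sequences $0\to\Omega^{-(i-1)}\Lambda\to I^{i-1}(\Lambda)\to\Omega^{-i}\Lambda\to0$ for $1\le i\le n$, using the convention $\Omega^{0}\Lambda=\Lambda$. Applying the standard inequality $\pd_\Lambda C\le\max\{\pd_\Lambda B,\pd_\Lambda A+1\}$ for a short exact sequence $0\to A\to B\to C\to 0$, I would prove by induction on $i$ that $\pd_\Lambda\Omega^{-i}\Lambda\le i$ for all $0\le i\le n$. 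The base case $i=0$ is trivial; for the inductive step, $\pd_\Lambda\Omega^{-i}\Lambda\le\max\{\pd_\Lambda I^{i-1}(\Lambda),\,\pd_\Lambda\Omega^{-(i-1)}\Lambda+1\}\le\max\{i,i\}=i$, where the first inequality uses quasi $n$-Gorensteinness ($\pd_\Lambda I^{i-1}(\Lambda)\le i$) and the second uses the induction hypothesis. Taking $i=n$ gives exactly what is needed.

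Having verified both hypotheses, Theorem \ref{2.9}(2) applies verbatim and identifies $(\bigoplus_{i=0}^{n-1}I^i(\Lambda))\oplus\Omega^{-n}\Lambda$ as the minimum element of $\tilt_n\Lambda$. There is no serious obstacle here; the argument is essentially a definition-chase plus a one-line induction, and the whole point of writing Theorem \ref{2.9} in the more flexible form (allowing $\pd_\Lambda I^i(\Lambda)\le n$ rather than $\le i$) is precisely so that the quasi $n$-Gorenstein case drops out as an immediate corollary.
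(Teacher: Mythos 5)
Your proposal is correct and follows the same route as the paper, which simply states that the corollary is ``Immediate from Theorem \ref{2.9}''; you have merely made explicit the verification that the paper leaves implicit. Both checks are right: $\pd_\Lambda I^i(\Lambda)\le i+1\le n$ for $0\le i\le n-1$ comes straight from the definition of quasi $n$-Gorenstein, and the induction $\pd_\Lambda\Omega^{-i}\Lambda\le i$ via the short exact sequences $0\to\Omega^{-(i-1)}\Lambda\to I^{i-1}(\Lambda)\to\Omega^{-i}\Lambda\to0$ correctly yields $\pd_\Lambda\Omega^{-n}\Lambda\le n$.
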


\begin{proof}
Immediate from Theorem \ref{2.9}
\end{proof}

Recall that an algebra $\Lambda$ is called of {\it dominant
dimension $n$} if $I^i(\Lambda)$ is projective for $0\leq i\leq n-1$
but $I^n(\Lambda)$ is not projective. Then we have the following
immediate from Theorem \ref{2.9}.

\begin{example}\label{2.7} Let $\Lambda$ be an algebra with dominant
dimension $n\geq 0$ and let
$T=I^0(\Lambda)\oplus\Omega^{-n}\Lambda$. Then $T$ is the minimum
element in $\tilt_{n}\Lambda$, which was studied recently in
\cite{CrS,Ma,NRTZ,PS}. The equality
$^{\bot}({T^{\bot}})=\PP_n(\Lambda)$ in Theorem \ref{2.9}(3) was
observed in \cite[2.5]{Ma}.
\end{example}

Now we give some applications to a question of
Auslander and Reiten which says that if $\Lambda$ is $n$-Gorenstein
for all non-negative integer $n$ then $\Lambda$ is
Iwanaga-Gorenstein. This is a generalization of the famous Nakayama
conjecture. We have the following:

\begin{corollary}\label{2.8} Let $\Lambda$ be a
$k$-Gorenstein algebra for all positive integer $k$, and $n$ a
non-negative integer. Then the following are equivalent:
\begin{enumerate}[\rm(1)]
\item $\Lambda$ is Iwanaga-Gorenstein with $\id_{\Lambda}\Lambda=\id_{\Lambda^{\rm op}}\Lambda\le n$.

\item $\id_{\Lambda}\Lambda\le n$.

\item $\id_{\Lambda^{\rm op}}\Lambda\le n$.

\item $\tilt\Lambda$ has the minimum element $T$ with $\pd_\Lambda T\le n$.

\item $\tilt\Lambda^{\rm op}$ has the minimum element $T$ with $\pd_{\Lambda^{\rm op}} T\le n$.

\item The subcategory $\PP_\infty(\Lambda)$ is contravariantly finite, and $\PP_\infty(\Lambda)=\PP_n(\Lambda)$.

\item The subcategory $\PP_\infty(\Lambda^{\rm op})$ is
contravariantly finite, and $\PP_\infty(\Lambda^{\rm
op})=\PP_n(\Lambda^{\rm op})$.
\end{enumerate}
\end{corollary}

\begin{proof}
$(1)\Rightarrow (2)$ is clear, $(2)\Leftrightarrow (3)$ follows from
\cite[Corollary 5.5]{AuR3}. Hence
$(1)\Leftrightarrow(2)\Leftrightarrow (3)$ holds.
$(2)\Leftrightarrow(4)$ follows from Corollary \ref{2.5}.
$(4)\Leftrightarrow (6)$ follows from \cite[Theorem 3.3]{HaU2} (see
Theorem \ref{2.12}). Dually, $(3)\Leftrightarrow(5)\Leftrightarrow
(7)$ holds.
\end{proof}

We give an example to show the existence and the constructing of minimal tilting
modules.

\begin{example}\label{4.1} Let $\Lambda=KQ/I$ be an algebra with the quiver $Q$:
\[\xymatrix{1\ar[r]^{a_1}
&2\ar[r]^{a_2} &\cdots\ar[r]^{a_{n-1}}& n\ar[r]^{a_n} &n+1}\] and
$I={\rm Rad}^2KQ$. Then
\begin{enumerate}[\rm(1)]
\item The global dimension of $\Lambda$ is $n$ and $P(i)=I(i+1)$ for
$1\leq i\leq n$
\item The minimal injective resolution of $\Lambda$ is as follows:

$0\rightarrow \Lambda\rightarrow(\bigoplus_{i=2}^{n+1}I(i)) \oplus
I(n+1)\rightarrow I(n)\rightarrow \cdots\rightarrow I(1)\rightarrow
0$.

Hence $\Lambda$ is $n$-Gorenstein.
\item The tilting module
$T_j=(\bigoplus_{i=2}^{n+1}I(i))\oplus S(n-j+1)$ is of projective
dimension $j$ for $0\leq j\leq n$ and $T_j$ is a minimal element in
the set $\tilt_{j}\Lambda$.
\end{enumerate}
\end{example}

\subsection{The category $\PP_n(\Lambda)^\bot$}
Since $\PP_n(\Lambda)$ is a resolving subcategory of $\mod\Lambda$,
it is very natural to study the corresponding coresolving subcategory
$\PP_n(\Lambda)^\bot$. Clearly the subcategory
\[\YY_n^1(\Lambda)=\add\Omega^{-n}(\mod\Lambda)\]
is contained in $\PP_n(\Lambda)^\bot$ and satisfies $^\bot\YY_n^1(\Lambda)=\PP_n(\Lambda)$.
Auslander and Reiten \cite[Theorem 1.2]{AuR3} proved that
$\YY_n^1(\Lambda)$ is always covariantly finite. Moreover they proved that
$\YY_i^1(\Lambda)$ is extension closed (or equivalently, coresolving)
for every $1\le i\le n$ if and only if $\Lambda$ is quasi $n$-Gorenstein \cite[Theorem 2.1]{AuR3}.
For more general class of algebras, it is natural to consider the extension closure
of $\YY_n^1(\Lambda)$.

For a subcategory $\mathscr{C}$ of $\mod\Lambda$, denote by
$\mathscr{C}^{*m}$ consisting of all $X\in\mod\Lambda$ having a
filtration $X=X_0\supseteq X_1\supseteq\cdots\supseteq X_m =0$ such that
$X_i/ X_{i+1}\in\mathscr{C}$. Denote by $\Omega^{-n}(\mod\Lambda)$
consisting of the modules of the form $\Omega^{-n}C\oplus I$ for all
$C$ in $\mod\Lambda$ and injective modules $I$. Denote by

\[\YY_n^m(\Lambda)=\add(\Omega^{-n}(\mod\Lambda)^{*m})\ \mbox{ and }\
\YY_n(\Lambda)=\bigcup_{m\geq0}\YY_n^m(\Lambda).\]

We have the following observations.

\begin{proposition}\label{Ynm}
Let $\Lambda$ be an algebra and $n$ a non-negative integer.
\begin{enumerate}[\rm(1)]
\item $\YY_n^m(\Lambda)$ is a covariantly finite subcategory for every $m>0$ such that $^\bot\YY_n^m(\Lambda)=\PP_n(\Lambda)$.
\item $\YY_n(\Lambda)$ is a coresolving subcategory such that $^\bot\YY_n(\Lambda)=\PP_n(\Lambda)$.
\end{enumerate}
\end{proposition}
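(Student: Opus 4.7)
The plan is to prove both parts by establishing the Ext identity first and then the structural properties.

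For ${}^\bot\YY_n^m(\Lambda) = \PP_n(\Lambda)$, I proceed by induction on $m$. The inclusion $\PP_n \subseteq {}^\bot\YY_n^m$ has base case $m = 1$ via dimension-shifting: for $P \in \PP_n$ and $\Omega^{-n}C \oplus I \in \Omega^{-n}(\mod\Lambda)$, the minimal injective coresolution of $C$ yields $\Ext^i_\Lambda(P, \Omega^{-n}C) \simeq \Ext^{i+n}_\Lambda(P, C) = 0$ for $i \geq 1$, using $\pd_\Lambda P \leq n$. The inductive step applies $\Hom_\Lambda(P, -)$ to $0 \to X_1 \to X \to X/X_1 \to 0$ obtained from any filtration of $X\in\Omega^{-n}(\mod\Lambda)^{*m}$. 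The reverse inclusion follows from $\YY_n^1 \subseteq \YY_n^m$ and the already-stated identity ${}^\bot\YY_n^1 = \PP_n$. The analogous identity ${}^\bot\YY_n = \bigcap_m {}^\bot\YY_n^m = \PP_n$ is then automatic. For covariant finiteness of $\YY_n^m$, I induct on $m$ with base case \cite[Theorem 1.2]{AuR3}, constructing left approximations via a pullback that combines left $\YY_n^{m-1}$- and $\YY_n^1$-approximations (reflecting the decomposition $\YY_n^m = \YY_n^{m-1} * \YY_n^1$ up to summands).

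For part (2), the main task is that $\YY_n$ is coresolving. Containing injectives is immediate. For extension closure, given $0 \to A \to B \to C \to 0$ with $A \in \YY_n^a$ and $C \in \YY_n^c$, I add complementary direct summands to obtain an exact sequence $0 \to \bar A \to \bar M \to \bar C \to 0$ in which $\bar A \in \Omega^{-n}(\mod\Lambda)^{*a}$ and $\bar C \in \Omega^{-n}(\mod\Lambda)^{*c}$ carry explicit filtrations; pulling back the filtration of $\bar C$ along $\bar M \twoheadrightarrow \bar C$ and concatenating with that of $\bar A$ produces $\bar M \in \Omega^{-n}(\mod\Lambda)^{*(a+c)}$, hence $B \in \YY_n^{a+c}$.

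The hard part is closure under cokernels of monomorphisms. The plan: given $0 \to A \to B \to C \to 0$ with $A, B \in \YY_n$, push out along an injective envelope $A \hookrightarrow I_A$ to obtain $B'$. The pushout sequence $0 \to I_A \to B' \to C \to 0$ splits since $I_A$ is injective, giving $B' \simeq I_A \oplus C$; hence $C$ is a summand of $B'$ and it suffices to show $B' \in \YY_n$. The companion pushout sequence $0 \to B \to B' \to I_A/A \to 0$ combined with the extension closure above reduces this to showing $I_A/A \in \YY_n$. I handle this via the following key lemma, proved by induction on $a$: \emph{if $A \in \YY_n^a$ then $\Omega^{-1}A \in \YY_n^a$.} The base case $a=1$ uses the identification $\Omega^{-(n+1)}C = \Omega^{-n}(\Omega^{-1}C)$ up to injective summands, placing the cosyzygy of $\Omega^{-n}C$ back into $\Omega^{-n}(\mod\Lambda) = \YY_n^1$. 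The inductive step applies the horseshoe lemma to a filtration step $0 \to A_1 \to A \to A/A_1 \to 0$, producing a short exact sequence $0 \to \Omega^{-1}A_1 \to \Omega^{-1}A \oplus J \to \Omega^{-1}(A/A_1) \to 0$ with $J$ injective; by induction the outer terms lie in $\YY_n^{a-1}$ and $\YY_n^1$, so the extension closure just established places $\Omega^{-1}A \in \YY_n^a$. The main technical care throughout is in tracking the injective-summand ambiguities in $\Omega^{-1}$ across the horseshoe construction and in the reduction from $\YY_n$ to its filtered representatives.
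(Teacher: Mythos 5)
Your proof is correct, and there is little in the paper to compare it against: the paper's own proof of this proposition is three sentences --- the orthogonality statements are declared to coincide with the known identity ${}^\bot\YY_n^1(\Lambda)=\PP_n(\Lambda)$, covariant finiteness of $\YY_n^m(\Lambda)$ is delegated to Chen's theorem \cite{C} that the extension category of two covariantly finite subcategories is again covariantly finite, and assertion (2) is declared clear. Your treatment of the orthogonality and of part (1) follows exactly this route (dimension shifting along \eqref{1}-type coresolutions for ${}^\bot\YY_n^1(\Lambda)=\PP_n(\Lambda)$, the base case of covariant finiteness from Auslander--Reiten, and the inductive step being in substance Chen's construction for $\YY_n^{m-1}*\YY_n^1$), so nothing differs there. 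Where you add genuine content is part (2): the only non-obvious point in showing $\YY_n(\Lambda)$ coresolving is closure under cokernels of monomorphisms, and your reduction --- push out $0\to A\to B\to C\to0$ along an injective envelope of $A$, split off $C$ as a direct summand of $B'\simeq I_A\oplus C$, and reduce via $0\to B\to B'\to \Omega^{-1}A\to0$ to the lemma that $\Omega^{-1}$ preserves $\YY_n^a(\Lambda)$, proved by induction on the filtration length using the horseshoe lemma and the already-established extension closure --- is a correct and complete justification of what the paper leaves implicit. The points requiring care, which you flag yourself, are the injective-summand ambiguities (in $\Omega^{-(n+1)}C=\Omega^{-n}(\Omega^{-1}C)$ and in the middle term $\Omega^{-1}A\oplus J$ of the horseshoe sequence) and the passage between $\YY_n^a(\Lambda)=\add(\Omega^{-n}(\mod\Lambda)^{*a})$ and its filtered representatives; both are absorbed by the facts that every $\YY_n^m(\Lambda)$ contains all injectives and is closed under direct summands, so no gap remains.
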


\begin{proof}
Clearly $^\bot\YY_n(\Lambda)$ and ${}^\bot\YY_n^m(\Lambda)$ coincide with ${}^\bot\YY_n^1(\Lambda)=\PP_n(\Lambda)$.
For (1), we refer to \cite{C}. The assertion (2) is clear.
\end{proof}

As a consequence, we have the following observations.

\begin{proposition}\label{2.13} For an algebra $\Lambda$,
consider the following five conditions:
\begin{enumerate}[\rm(1)]
\item The subcategory $\PP_n(\Lambda)$ is
contravariantly finite.

\item $\YY_n(\Lambda)$ is covariantly finite.

\item $\YY_n^m(\Lambda)$ is closed under extensions for some $m>0$.

\item $\YY_n(\Lambda)=\YY_n^m(\Lambda)$ holds for
some $m>0$

\item $\YY_n^1(\Lambda)$ is closed under extensions.

\end{enumerate}
Then we have
$(5)\Rightarrow(4)\Leftrightarrow(3)\Leftrightarrow(2)\Rightarrow(1)$.
\end{proposition}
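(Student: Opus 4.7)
The strategy is to prove the cycle $(5)\Rightarrow(4)\Leftrightarrow(3)\Leftrightarrow(2)\Rightarrow(1)$ using Theorem \ref{cotorsion pair}, Proposition \ref{Ynm}, and elementary filtration manipulations.

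The implications $(5)\Rightarrow(4)$ and $(4)\Leftrightarrow(3)$ rest on a single inductive observation. Suppose $\YY_n^m(\Lambda)$ is closed under extensions for some $m\ge 1$ (which is hypothesis $(5)$ when $m=1$). I claim $\YY_n^{m+k}(\Lambda)=\YY_n^m(\Lambda)$ for every $k\ge 0$: given a filtration $X'=X'_0\supseteq\cdots\supseteq X'_{m+k+1}=0$ of length $m+k+1$, the short exact sequence $0\to X'_1\to X'_0\to X'_0/X'_1\to 0$ has outer terms in $\YY_n^{m+k}(\Lambda)\subseteq\YY_n^m(\Lambda)$ (by induction) and in $\Omega^{-n}(\mod\Lambda)\subseteq\YY_n^m(\Lambda)$, so closure under extensions and summands forces $X'\in\YY_n^m(\Lambda)$. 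Thus $\YY_n(\Lambda)=\YY_n^m(\Lambda)$, handling $(5)\Rightarrow(4)$ and $(3)\Rightarrow(4)$. The reverse $(4)\Rightarrow(3)$ is immediate because $\YY_n(\Lambda)$ is closed under extensions (filtration lengths $a$ and $b$ combine to length $a+b$). The implication $(4)\Rightarrow(2)$ is immediate from Proposition \ref{Ynm}(1). For $(2)\Rightarrow(1)$: by Proposition \ref{Ynm}(2), $\YY_n(\Lambda)$ is coresolving and contains $\Omega^{-n}(\Lambda)$, and under the covariant-finiteness hypothesis, Theorem \ref{cotorsion pair} yields a tilting module $T\in\tilt_n\Lambda$ with $T^\bot=\YY_n(\Lambda)$ and ${}^\bot(T^\bot)=\PP_n(\Lambda)$ contravariantly finite.

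The main obstacle is $(2)\Rightarrow(4)$. From the tilting module $T$ produced in the previous step, $T\in T^\bot=\YY_n(\Lambda)=\bigcup_m\YY_n^m(\Lambda)$ yields some $m_0$ with $T\in\YY_n^{m_0}(\Lambda)$. The plan is to establish $\YY_n(\Lambda)=\YY_n^{m^*}(\Lambda)$ for some $m^*=m^*(m_0,n)$ by exploiting the cotorsion-pair structure $(\PP_n(\Lambda),T^\bot)$ whose core is $\add T=\PP_n(\Lambda)\cap T^\bot$, combined with $\pd T\le n$: for each $Y\in T^\bot$ one constructs a bounded-length $\add T$-coresolution $0\to Y\to T^0\to T^1\to\cdots\to T^n\to 0$ (terminating because of the dimension bound on $T$) and splices it with the $\YY_n^{m_0}(\Lambda)$-filtrations of the $T^i\in\add T\subseteq\YY_n^{m_0}(\Lambda)$. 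The delicate technical point is that a coresolution exhibits $Y$ as a submodule of $T^0$ rather than a quotient, so converting it into a filtration of $Y$ by $\Omega^{-n}(\mod\Lambda)$-quotients requires careful inductive bookkeeping—likely using Proposition \ref{cover} to iteratively cogenerate within $T^\bot$, together with the fact from Proposition \ref{Ynm}(2) that $\YY_n(\Lambda)$ is closed under cokernels of monomorphisms, so that filtrations propagate along the successive short exact sequences of the coresolution.
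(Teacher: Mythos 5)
Your arguments for $(5)\Rightarrow(4)$, $(3)\Leftrightarrow(4)$, $(4)\Rightarrow(2)$ and $(2)\Rightarrow(1)$ are correct: the inductive observation that extension-closure of $\YY_n^m(\Lambda)$ forces $\YY_n^{m+k}(\Lambda)=\YY_n^m(\Lambda)$ is exactly what is needed, and routing $(2)\Rightarrow(1)$ through Theorem \ref{cotorsion pair} is a legitimate alternative to the paper's direct citation of Auslander--Reiten. The problem is $(2)\Rightarrow(4)$, the one genuinely nontrivial implication, which you only sketch, and the sketch has two gaps. First, an object $Y$ of $T^\bot$ need not admit a coresolution $0\to Y\to T^0\to\cdots\to T^n\to0$ with $T^i\in\add T$: in the cotorsion pair $({}^\bot(T^\bot),T^\bot)$ the class $\add T$ plays the role of the Ext-\emph{projectives} of the exact category $T^\bot$ (this is how it is used in the proof of Proposition \ref{characterizations of minimal}), not of the Ext-injectives; $\Ext^1_\Lambda(Y,T)$ need not vanish for $Y\in T^\bot$, so there is no reason such a coresolution exists, let alone terminates after $n$ steps ($\pd_\Lambda T\le n$ bounds resolutions by projectives, not coresolutions by $\add T$). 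Second, and more fundamentally, even granting such a coresolution you cannot convert it into a bound on the filtration length of $Y$: membership in $\YY_n^m(\Lambda)$ is witnessed by a filtration, i.e.\ by exhibiting $Y$ as an iterated extension, whereas $0\to Y\to T^0\to Y^1\to0$ exhibits $Y$ as a kernel. The classes $\YY_n^m(\Lambda)$ are not closed under kernels of epimorphisms in any degree-controlled way, closure under cokernels of monomorphisms points in the wrong direction, and Proposition \ref{cover} produces $\Ext^1$-vanishing, not filtrations. So the ``careful inductive bookkeeping'' you defer to is not bookkeeping but the missing argument itself.

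The paper closes this gap by a different and more elementary device, which you may want to compare. Given (2), take a left $\YY_n(\Lambda)$-approximation $S\to Y$ of the semisimple module $S=\Lambda/\rad\Lambda$; since $Y$ is a single module and $\YY_n(\Lambda)=\bigcup_m\YY_n^m(\Lambda)$, one has $Y\in\YY_n^m(\Lambda)$ for some $m$. Every $X\in\mod\Lambda$ lies in $(\add S)^{*\ell}$ for $\ell$ the Loewy length of $\Lambda$, so the Horseshoe-type lemma for approximations by a coresolving subcategory (\cite[Proposition 3.6]{AuR1}, applicable because $\YY_n(\Lambda)$ is coresolving by Proposition \ref{Ynm}(2)) yields a left $\YY_n(\Lambda)$-approximation $X\to Y_X$ with $Y_X\in(\add Y)^{*\ell}\subseteq\YY_n^{m\ell}(\Lambda)$. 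If $X\in\YY_n(\Lambda)$, this approximation is a split monomorphism, and since $\YY_n^{m\ell}(\Lambda)$ is closed under direct summands, $X\in\YY_n^{m\ell}(\Lambda)$; hence $\YY_n(\Lambda)=\YY_n^{m\ell}(\Lambda)$. The idea you are missing is to extract the uniform bound from a \emph{single} approximation of $S$ propagated along the radical filtration, rather than from the tilting module.
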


\begin{proof}
$(5)\Rightarrow (3)$ We may choose $m=1$.

$(3)\Rightarrow(2)$ Assume that $\YY_n^m(\Lambda)$ is extension closed.
Then $\YY_n(\Lambda)=\YY_n^m(\Lambda)$.
This is covariantly finite by Proposition \ref{Ynm}(1).


$(2)\Rightarrow (4)$ Let $\ell$ is the Loewy length of $\Lambda$,
$S=\Lambda/\rad\Lambda$ and $S\to Y$ a left
$\YY_n(\Lambda)$-approximation of $S$.
Then $Y$ belongs to $\YY_n^m(\Lambda)$ for some $m>0$.
We claim $\YY_n(\Lambda)=\YY_n^{m\ell}(\Lambda)$.

Since any $X\in\mod\Lambda$ belongs to $(\add S)^{*\ell}$,
the Horseshoe-type Lemma \cite[Proposition 3.6]{AuR1} shows that
$X$ has a left $\YY_n(\Lambda)$-approximation $X\rightarrow Y$
with $Y\in(\add Y)^{*\ell}\subseteq\YY_n^{m\ell}(\Lambda)$.
If $X\in\YY_n(\Lambda)$, then $f$ is a split monomorphism.
Thus $X\in\YY_n^{m\ell}(\Lambda)$ holds.

$(4)\Rightarrow(3)$ This is clear since $\YY_n(\Lambda)$ is extension closed.

$(2)\Rightarrow (1)$ By Proposition \ref{Ynm}(2), $\YY_n(\Lambda)$ is a covariantly
finite coresolving subcategory such that $^\bot\YY_n(\Lambda)=\PP_n(\Lambda)$.
By \cite[Lemma 3.3 (a)]{AuR2}, $\PP_n(\Lambda)$ is contravariantly finite.
\end{proof}

We should remark that $(5)$ is not equivalent to $(2)$ in
Proposition \ref{2.13}. We give an example to show this.
However, we do not know whether $(2)$ is equivalent to $(1)$ or not.

\begin{example}\label{4.4}
Let $\Lambda$ be a local algebra with Loewy length $2$. Then
$\YY_1^1(\Lambda)=\add\{K,\mathbb{D}\Lambda\}$ holds.
Thus it is closed under extensions if and only if $\Lambda$ is self-injective.

On the other hand, $\YY_1^2(\Lambda)=\mod\Lambda$ holds,
and hence Proposition \ref{2.13}(3) is satisfied.
Moreover, $\tilt_1\Lambda$ has a minimal element $\Lambda$,
and $\PP_1(\Lambda)=\add\Lambda$ is contravariantly finite.
\end{example}

\section {A bijection between classical tilting modules and support $\tau$-tilting modules}

 Throughout this section,
$\Lambda$ is an  $1$-Gorenstein algebra and $e$ is an idempotent
such that $e\Lambda$ is an additive generator of
projective-injective modules. Denote by $\Gamma=\Lambda/(e)$ the
factor algebra of $\Lambda$. We mainly focus on the bijection between
classical tilting modules over an $1$-Gorenstein algebra $\Lambda$
and support $\tau$-tilting modules over the factor algebra $\Gamma$.

Denote by $\tau$ the AR-translation and denote by $|N|$ the number
of non-isomorphic indecomposable direct summands of $N$ for a
$\Lambda$-module $N$. Firstly, we recall the definition of support
$\tau$-tilting modules in \cite{AIR}.

\begin{definition}\label{3.1}
\begin{enumerate}[\rm(1)]
\item We call $N \in \mod \Lambda$ {\it $\tau$-rigid} if ${\rm
Hom}_{\Lambda}(N, \tau N) = 0$.
\item We call $N \in \mod \Lambda$ {\it $\tau$-tilting} if $N$ is
$\tau$-rigid and $|N| = |\Lambda|$.
\item We call $N \in \mod \Lambda$ {\it support $\tau$-tilting} if there
exists an idempotent $e$ of $\Lambda$ such that $N$ is a
$\tau$-tilting $(\Lambda/(e))$-module.
\end{enumerate}
\end{definition}

The following property is also needed for the main result in this
section.

\begin{lemma}\label{3.2}\cite{AIR} For an algebra $\Lambda$, classical
tilting $\Lambda$-modules are precisely faithful support
$\tau$-tilting $\Lambda$-modules.
\end{lemma}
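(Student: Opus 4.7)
The plan is to prove the biconditional in two directions, relying on the Auslander--Reiten duality formula $\Hom_\Lambda(M,\tau N) \cong D\overline{\Ext}^1_\Lambda(N,M)$ and on Wakamatsu's lemma.

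\textbf{Forward direction.} Suppose $T$ is a classical tilting module, so by definition $\pd_\Lambda T \le 1$, $\Ext^1_\Lambda(T,T) = 0$, and there is an exact sequence $0 \to \Lambda \to T^0 \to T^1 \to 0$ with $T^i \in \add T$. First, since $\pd_\Lambda T \le 1$ we have $\overline{\Ext}^1_\Lambda(T,T) = \Ext^1_\Lambda(T,T) = 0$, so the Auslander--Reiten formula yields $\Hom_\Lambda(T,\tau T) = 0$ and $T$ is $\tau$-rigid. Second, the classical Brenner--Butler count gives $|T| = |\Lambda|$, so $T$ is $\tau$-tilting, hence support $\tau$-tilting with trivial support idempotent. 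Finally, the monomorphism $\Lambda \hookrightarrow T^0 \in \add T$ exhibits $\Lambda$ as a submodule of a sum of copies of $T$, forcing $\ann_\Lambda T = 0$, so $T$ is faithful.

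\textbf{Reverse direction.} Let $T$ be a faithful support $\tau$-tilting module with associated idempotent $e$, so $Te = 0$ and $T$ is $\tau$-tilting over $\Lambda/\langle e\rangle$. Faithfulness forces $\langle e\rangle \subseteq \ann_\Lambda T = 0$, hence $e = 0$ and $|T| = |\Lambda|$. To recover the tilting data, I would take a minimal left $\add T$-approximation $\iota \colon \Lambda \to T^0$. Wakamatsu's lemma together with the faithfulness of $T$ shows $\iota$ is injective; set $T^1 := \Coker \iota$, giving an exact sequence $0 \to \Lambda \xrightarrow{\iota} T^0 \to T^1 \to 0$. The $\tau$-rigidity of $T$ translates via the Auslander--Reiten formula into the vanishing of $\Ext^1_\Lambda(T,M)$ for every $M \in \Fac T$; in particular for $M = T^1 \in \Fac T$. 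Combined with the approximation property of $\iota$, this forces $T^1 \in \add T$. The sequence then simultaneously witnesses $\pd_\Lambda T \le 1$ and provides the required coresolution. Finally, $\Ext^1_\Lambda(T,T) = 0$ follows because $\pd_\Lambda T \le 1$ collapses $\overline{\Ext}^1$ onto $\Ext^1$, and $\Hom_\Lambda(T,\tau T) = 0$ by $\tau$-rigidity.

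\textbf{Main obstacle.} The subtle step is the reverse direction: passing from the abstract data ``faithful, $\tau$-rigid, $|T|=|\Lambda|$'' to the concrete coresolution $0 \to \Lambda \to T^0 \to T^1 \to 0$ with $T^i \in \add T$. Both the Wakamatsu-type argument that promotes the minimal $\add T$-approximation $\iota$ to a monomorphism (using faithfulness), and the identification $T^1 \in \add T$ (using the torsion-theoretic interpretation of $\tau$-rigidity, namely $\Ext^1_\Lambda(T,\Fac T) = 0$), are the crucial technical points. Once this sequence is produced, the remaining tilting axioms follow formally.
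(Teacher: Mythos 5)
The paper does not actually prove this lemma; it is imported verbatim from \cite{AIR} (Proposition 2.2(b) there), so your attempt can only be measured against that standard argument, whose skeleton you do follow. Your forward direction is correct. The reverse direction, however, has a genuine gap: you never establish $\pd_\Lambda T\le 1$. The sentence ``the sequence then simultaneously witnesses $\pd_\Lambda T\le1$ and provides the required coresolution'' is false: $0\to\Lambda\to T^0\to T^1\to0$ is a coresolution of $\Lambda$ by $\add T$, not a projective resolution of $T$, and its existence says nothing about $\pd_\Lambda T$ (for $T=\Lambda\oplus M$ one always has the coresolution $0\to\Lambda\to\Lambda\to0\to0$, whatever $\pd_\Lambda M$ is). Worse, the steps you do carry out silently use $\pd_\Lambda T\le1$: deducing $\pd_\Lambda T^1\le1$ from the sequence needs $\pd_\Lambda T^0\le1$, and your final ``$\pd\le1$ collapses $\overline{\Ext}^1$ onto $\Ext^1$'' presupposes it. The missing argument is exactly \cite[Prop.~2.2(a)]{AIR}: faithfulness gives a monomorphism $\Lambda\hookrightarrow T^d$; since $\mathbb{D}\Lambda$ is injective, every map $\Lambda\to\mathbb{D}\Lambda$ extends to $T^d$, so $\mathbb{D}\Lambda\in\Fac T$; the class $\{M:\Hom_\Lambda(M,\tau T)=0\}$ contains $T$ (by $\tau$-rigidity) and is closed under quotients, hence contains $\Fac T$, giving $\Hom_\Lambda(\mathbb{D}\Lambda,\tau T)=0$, which is equivalent to $\pd_\Lambda T\le1$.

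A second, smaller gap is the claim that $\Ext^1_\Lambda(T,T^1)=0$ ``combined with the approximation property of $\iota$'' forces $T^1\in\add T$. It does not, in one step. The approximation property yields the surjectivity of $\Hom_\Lambda(T^0,T)\to\Hom_\Lambda(\Lambda,T)$ and hence $\Ext^1_\Lambda(T^1,T)=0$; together with $\Ext^1_\Lambda(T,T^1)=0$, $\pd_\Lambda T^1\le1$ and the resulting $\Ext^1_\Lambda(T^1,T^1)=0$, one concludes that $T\oplus T^1$ is rigid of projective dimension at most one, and only then does the Bongartz-type bound $|T\oplus T^1|\le|\Lambda|=|T|$ force $T^1\in\add T$. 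That counting step is indispensable and is absent from your write-up.
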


Now we are in a position to state the following properties of
tilting modules over  $1$-Gorenstein algebras.

\begin{lemma}\label{3.3} For an $1$-Gorenstein algebra $\Lambda$
and a classical tilting $\Lambda$-module $T$, then
\begin{enumerate}[\rm(1)]
\item Each indecomposable projective-injective $\Lambda$-module is a direct summand of $T$.
\item Every support $\tau$-tilting $\Lambda$-module $M$ admitting $e\Lambda$
as a direct summand is a classical tilting module.
\end{enumerate}
\end{lemma}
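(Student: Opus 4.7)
The plan for part (1) is to exploit both the projectivity and injectivity of an indecomposable projective-injective $\Lambda$-module $P$. First, since $P$ is projective, it is a direct summand of $\Lambda$, giving a split embedding $P\hookrightarrow\Lambda$. Second, the definition of a tilting module supplies an exact sequence $0\to\Lambda\to T_0\to T_1\to 0$ with $T_0,T_1\in\add T$. Composing the two inclusions gives a monomorphism $P\hookrightarrow T_0$, and since $P$ is injective this monomorphism splits. Thus $P$ is a direct summand of $T_0$, hence of $T$. Notice that the $1$-Gorenstein hypothesis is not actually used in this step; it works for any classical tilting module over an arbitrary algebra.

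For part (2), the plan is to invoke the characterization in Lemma \ref{3.2} that classical tilting modules are exactly the faithful support $\tau$-tilting modules. Since $M$ is already support $\tau$-tilting by hypothesis, the task reduces to showing that $M$ is faithful, and since $e\Lambda$ is a direct summand of $M$, it suffices to show that $e\Lambda$ itself is faithful. Here the $1$-Gorenstein assumption enters crucially: by definition it gives $\pd_\Lambda I^0(\Lambda)\le 0$, so $I^0(\Lambda)$ is projective, hence projective-injective, and therefore lies in $\add e\Lambda$. The canonical essential monomorphism $\Lambda\hookrightarrow I^0(\Lambda)$ then produces an embedding of $\Lambda$ into a finite direct sum of copies of $e\Lambda$, which shows that the annihilator of $e\Lambda$ in $\Lambda$ is zero, i.e.\ $e\Lambda$ is faithful.

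I do not anticipate a serious obstacle in either part. Both assertions follow from the elementary interplay between the projective and injective properties of summands of $e\Lambda$, combined with the fact that the $1$-Gorenstein hypothesis forces the injective envelope of $\Lambda$ to lie in $\add e\Lambda$. The only conceptual point worth emphasising is that for part (1) the injectivity of $P$ is what allows us to split it off from $T_0$, while for part (2) it is the embedding of $\Lambda$ into $I^0(\Lambda)\in\add e\Lambda$ that upgrades the $\tau$-tilting condition on $M$ to the tilting condition.
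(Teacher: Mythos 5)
Your proof is correct. Part (2) is essentially identical to the paper's argument: both reduce to faithfulness via Lemma \ref{3.2} and use the $1$-Gorenstein hypothesis only to see that $I^0(\Lambda)$ is projective-injective, hence in $\add e\Lambda$, so that $\Lambda$ embeds into $\add M$. For part (1) you take a genuinely different (and arguably more direct) route. The paper again invokes Lemma \ref{3.2}: $T$ is faithful, hence every injective is generated by $T$, and the resulting epimorphism onto a projective-injective $P$ splits by projectivity of $P$. You instead use the coresolution $0\to\Lambda\to T_0\to T_1\to 0$ from the definition of a tilting module: the split embedding $P\hookrightarrow\Lambda$ composed with $\Lambda\hookrightarrow T_0$ is a monomorphism that splits by \emph{injectivity} of $P$. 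The two arguments are dual in flavour; yours avoids the faithfulness characterization entirely for this part and makes transparent your (correct) observation that the $1$-Gorenstein hypothesis plays no role in (1) --- something that is equally true of, but less visible in, the paper's proof.
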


\begin{proof} (1) Since $T$ is a classical tilting module, by Lemma \ref{3.2}
$T$ is faithful, and hence any injective module $I$ is generated by
$T$. Then we get $I$ is a direct summand of $T$ since $I$ is
projective and $T$ is basic.

(2) Since $\Lambda$ is $1$-Gorenstein, then $\Lambda$ can be
embedded in $\add_{\Lambda}e\Lambda$, and hence $\Lambda$ can be
embedded in $\add_{\Lambda}M$. Then $M$ is faithful, by Lemma
\ref{3.2}, $M$ is a classical tilting module.
\end{proof}

For an algebra $A$, denote by $\sttilt_{U}A$ the set of support
$\tau$-tilting $A$-modules with a direct summand $U$. Denote by
$U^{\bot_0}$ the subcategory consisting of modules $M$ such that
$\Hom_{\Lambda}(U,M)=0$. The following theorem \cite{J} is essential
to the main result in this section.

\begin{theorem}\label{3.4}
Let $A$ be an algebra and let $U$ be a basic $\tau$-rigid
$A$-module. Let $T$ be the Bongartz completion of $U$, $B=\End_{A}T$
and $C=B/(e_U)$, where $e_U$ is the idempotent corresponding to the
projective $B$-module $\Hom_{A}(T,U)$. Then there is a bijection
$\phi: \sttilt_{U}A\rightarrow\sttilt C$ via $M\rightarrow
\Hom_{A}(T,fM)$, where $0\rightarrow tM\rightarrow M\rightarrow
fM\rightarrow0$ is the canonical sequence according to the torsion
pair $(\Fac U,U^{\bot_0})$.
\end{theorem}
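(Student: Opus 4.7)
The plan is to apply the Adachi--Iyama--Reiten bijection between basic support $\tau$-tilting $A$-modules and functorially finite torsion classes in $\mod A$, given by $M\mapsto\Fac M$, and thereby reduce Theorem \ref{3.4} to a statement about intervals of torsion classes. First I would identify $\sttilt_U A$ with the interval consisting of all functorially finite torsion classes $\TT$ in $\mod A$ satisfying $\Fac U\subseteq\TT\subseteq\Fac T$. The lower inclusion is immediate from $U\in\add M$ for $M\in\sttilt_U A$. For the upper inclusion I would use the defining property of the Bongartz completion, namely $\Fac T={}^{\bot_0}(\tau U)$, where ${}^{\bot_0}(\tau U)$ denotes the modules $X$ with $\Hom_A(X,\tau U)=0$; then $\tau$-rigidity of $M$ together with $U\in\add M$ forces $\Hom_A(M,\tau U)=0$, so $M\in\Fac T$ and hence $\Fac M\subseteq\Fac T$.

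Next, I would set up the central equivalence $F:=\Hom_A(T,-)\colon\Fac T\cap U^{\bot_0}\xrightarrow{\sim}\mod C$. Since $T$ is a $\tau$-tilting $A$-module, the restriction $\Hom_A(T,-)\colon\Fac T\to\mod B$ is fully faithful, with quasi-inverse given by a suitable tensor-product functor on its essential image. The extra condition $X\in U^{\bot_0}$ translates exactly to $F(X)\cdot e_U=0$, i.e.\ $F(X)\in\mod C$; conversely, essential surjectivity onto $\mod C$ is obtained by lifting a $C$-module to $\Fac T$ and then applying the torsion-free functor $f$ associated with the torsion pair $(\Fac U,U^{\bot_0})$. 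Establishing this equivalence carefully (full faithfulness, landing in $\mod C$, and essential surjectivity) is the main technical obstacle of the proof.

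Finally, combining these ingredients, $F$ sends functorially finite torsion classes in $\Fac T\cap U^{\bot_0}$ bijectively to functorially finite torsion classes in $\mod C$, which on the $C$-side correspond via AIR to $\sttilt C$. Given $M\in\sttilt_U A$, the canonical sequence $0\to tM\to M\to fM\to 0$ places $fM\in\Fac T\cap U^{\bot_0}$, and one verifies $\Fac M\cap U^{\bot_0}=\Fac(fM)$; under $F$ this is sent to the torsion class in $\mod C$ generated by $F(fM)=\Hom_A(T,fM)$. Applying the AIR bijection on the $C$-side then identifies this torsion class with the basic support $\tau$-tilting $C$-module $\Hom_A(T,fM)$, yielding the formula $\phi(M)=\Hom_A(T,fM)$ and completing the proof.
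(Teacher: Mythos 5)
First, a point of reference: the paper does not prove Theorem \ref{3.4} at all --- it is quoted from Jasso \cite{J} and used as a black box, so there is no in-paper proof to compare against. Your outline is, in substance, the strategy of Jasso's original proof: pass through the Adachi--Iyama--Reiten bijection between support $\tau$-tilting modules and functorially finite torsion classes, identify $\sttilt_{U}A$ with the interval of such torsion classes between $\Fac U$ and $\Fac T={}^{\bot_0}(\tau U)$, and set up the equivalence $\Hom_A(T,-)\colon\Fac T\cap U^{\bot_0}\to\mod C$. The individual claims you do make are correct; for instance $F(X)e_U\cong\Hom_B(\Hom_A(T,U),F(X))\cong\Hom_A(U,X)$, and full faithfulness of $\Hom_A(T,-)$ on $\Fac T$ follows by viewing the $\tau$-tilting module $T$ as a genuine tilting module over $A/\operatorname{ann}T$.

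As a proof, however, the proposal has real gaps beyond the one you flag (essential surjectivity). (i) For the interval identification you argue only that $M\in\sttilt_U A$ forces $\Fac U\subseteq\Fac M\subseteq\Fac T$; you must also show the converse, namely that every functorially finite torsion class $\TT$ with $\Fac U\subseteq\TT\subseteq{}^{\bot_0}(\tau U)$ has $U$ as a direct summand of its associated support $\tau$-tilting module, which requires the AIR characterization of the Ext-projectives of $\TT$. (ii) The decisive step ``$F$ sends functorially finite torsion classes in $\Fac T\cap U^{\bot_0}$ bijectively to functorially finite torsion classes in $\mod C$'' conceals two issues: $\Fac T\cap U^{\bot_0}$ is not extension-closed in $\mod A$, so ``torsion classes'' there only make sense for the abelian structure transported from $\mod C$; and one must prove that $\TT\mapsto\TT\cap U^{\bot_0}$ is a bijection from the interval onto these torsion classes \emph{and} that functorial finiteness is preserved in both directions --- in Jasso's paper this is a separate theorem with its own nontrivial proof, not a formal consequence of the equivalence. (iii) The identity $\Fac M\cap U^{\bot_0}=\Fac(fM)$ is not literally correct: $U^{\bot_0}$ is a torsion-free class, hence not closed under quotients, so $\Fac(fM)$ need not lie in $U^{\bot_0}$; what holds is $\Fac M\cap U^{\bot_0}=\Fac(fM)\cap U^{\bot_0}$, i.e.\ $fM$ generates the relevant torsion class inside $\Fac T\cap U^{\bot_0}$. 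In short, the road map is the right one, but what you have is an outline of Jasso's argument rather than a complete proof.
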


Recall that $\tilt_1 \Lambda$ is the set of additive equaivalence
classes of classical tilting $\Lambda$-modules. Now we are in a
position to show our main result in this section.

\begin{theorem}\label{3.5}
Let $\Lambda$ be an $1$-Gorenstein algebra and $\Gamma=\Lambda/(e)$,
where $e$ is an idempotent such that $e\Lambda$ is a basic part of
projective-injective modules. Then the tensor functor
$-\otimes_{\Lambda}\Gamma$ induces a bijection from $\tilt_1
\Lambda$ to $\sttilt \Gamma$.
\end{theorem}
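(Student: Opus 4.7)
The plan is to identify the map with the one provided by Theorem \ref{3.4} after matching up all the inputs. First, I would argue that the set $\tilt_1\Lambda$ coincides with the set $\sttilt_{e\Lambda}\Lambda$ of basic support $\tau$-tilting $\Lambda$-modules having $e\Lambda$ as a direct summand. The inclusion $\tilt_1\Lambda\subseteq\sttilt_{e\Lambda}\Lambda$ is Lemma \ref{3.3}(1), which uses that any classical tilting module is faithful (Lemma \ref{3.2}) so that every indecomposable projective-injective is a summand; the converse inclusion is Lemma \ref{3.3}(2), which uses the $1$-Gorenstein hypothesis to embed $\Lambda$ into $\add(e\Lambda)$ and deduce faithfulness.

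Next I would apply Theorem \ref{3.4} with $A=\Lambda$ and $U=e\Lambda$. Since $U$ is projective it is automatically $\tau$-rigid, and $\Lambda$ is itself a tilting module containing $U$ as a summand; because $\Lambda$ is the maximum element of $\sttilt_{e\Lambda}\Lambda$, it is the Bongartz completion of $U$. Thus in the notation of Theorem \ref{3.4} one has $T=\Lambda$ and $B=\End_\Lambda\Lambda=\Lambda$. The projective $B$-module $\Hom_\Lambda(T,U)=\Hom_\Lambda(\Lambda,e\Lambda)=e\Lambda$ corresponds to the idempotent $e\in B=\Lambda$, so $e_U=e$ and $C=B/(e_U)=\Gamma$. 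Theorem \ref{3.4} then provides a bijection $\phi:\sttilt_{e\Lambda}\Lambda\to\sttilt\Gamma$ given by $M\mapsto\Hom_\Lambda(T,fM)$.

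Finally I would identify this bijection with the tensor functor $-\otimes_\Lambda\Gamma$. Because $U=e\Lambda$ is projective, the torsion class $\Fac U$ consists of the modules generated by $e\Lambda$, so the torsion part $tM$ coincides with the trace $Me\Lambda$ of $e\Lambda$ in $M$. Therefore
\[\Hom_\Lambda(T,fM)=\Hom_\Lambda(\Lambda,M/Me\Lambda)=M/Me\Lambda=M\otimes_\Lambda\Gamma,\]
which exhibits $\phi$ as the restriction of $-\otimes_\Lambda\Gamma$ to $\tilt_1\Lambda=\sttilt_{e\Lambda}\Lambda$, completing the proof.

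The step I expect to require the most care is the matching of the Bongartz completion data in the second paragraph: one must verify that, under the $1$-Gorenstein assumption, $\Lambda$ really is the maximum of $\sttilt_{e\Lambda}\Lambda$ (which follows once we know $\tilt_1\Lambda=\sttilt_{e\Lambda}\Lambda$), and that the image idempotent $e_U$ lands on the nose at the chosen idempotent $e$. The identification in the last step is essentially formal once one notes the projectivity of $U$, so the real content sits in the set-theoretic identification of Step 1 together with the Bongartz-completion bookkeeping.
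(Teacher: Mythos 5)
Your proposal is correct and follows essentially the same route as the paper's proof: identify $\tilt_1\Lambda$ with $\sttilt_{e\Lambda}\Lambda$ via Lemmas \ref{3.2} and \ref{3.3}, apply Theorem \ref{3.4} with $U=e\Lambda$ and Bongartz completion $T=\Lambda$ so that $C=\Gamma$, and then identify $\phi(M)=\Hom_\Lambda(\Lambda,M/Me\Lambda)$ with $M\otimes_\Lambda\Gamma$. Your treatment is in fact slightly more explicit than the paper's on why $\Lambda$ is the Bongartz completion and on the two-sided nature of the identification $\tilt_1\Lambda=\sttilt_{e\Lambda}\Lambda$.
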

\begin{proof} Since $\Lambda$ is  $1$-Gorenstein, we get
a basic additive generator $e\Lambda$ of projective-injective
modules. Let $U=e\Lambda$, then every support $\tau$-tilting
$\Lambda$-module $N$ admitting $U$ as a direct summand is faithful
since $U=e\Lambda$ is faithful. Then by Lemma \ref{3.2} or Lemma
\ref{3.3}, $N$ is a classical tilting $\Lambda$-module. On the other
hand, the Bongartz completion of $U$ is nothing but $T=\Lambda$.
Then $\End_{\Lambda}T=\Lambda$ and $\Hom_{\Lambda}(T,U)\simeq
\Hom_{\Lambda}(\Lambda, U)\simeq U$ as right $\Lambda$-modules. Now
we get $e=e_U$ in Theorem \ref{3.4}. Then we get a bijection $\phi$
from $\tilt_1\Lambda$ to $\sttilt\Gamma$.

In the following we show $-\otimes_{\Lambda}\Gamma=\phi$ as a map.
Note that $U=e\Lambda$, then the canonical sequence of a tilting
module $T'$ according to the torsion pair $(\Fac U, U^{\bot_0})$ is
$0\rightarrow T'(e)\rightarrow T'\rightarrow T'/T'(e)\rightarrow 0$.
Then by Theorem 4.4, $\phi(T')=\Hom_{\Lambda}(\Lambda,
T'/T'(e))\simeq T'/T'(e)$ as a support $\tau$-tilting
$\Gamma$-module. By \cite{DIRRT}, $-\otimes_{\Lambda}\Gamma$ is a
map from $\tilt_1 \Lambda$ to $\sttilt\Gamma$. Note that
$T'\otimes_{\Lambda}\Gamma\simeq T'/T'(e)$ for any $T'\in
\tilt_1\Lambda$, then $\phi=-\otimes_{\Lambda}\Gamma$ as a map.
\end{proof}

To give some applications of Theorem \ref{3.5}, we prepare the following facts.

\begin{example}\label{4.2} Let $\Lambda_n$ be the Auslander algebra of $K[x]/(x^n)$ and let $\Gamma_{n-1}$ be the preprojective
algebra of type $A_{n-1}$. Then we have the following:
\begin{enumerate}[\rm(1)]
\item  $\Lambda_n$ is given by the quiver: \[\xymatrix{
1\ar@<2pt>[r]^{a_1}&2\ar@<2pt>[r]^{a_2}\ar@<2pt>[l]^{b_2}&3\ar@<2pt>[r]^{a_3}\ar@<2pt>[l]^{b_3}&\cdots\ar@<2pt>[r]^{a_{n-2}}\ar@<2pt>[l]^{b_4}&n-1\ar@<2pt>[r]^{a_{n-1}}\ar@<2pt>[l]^{b_{n-1}}&n\ar@<2pt>[l]^{b_n}
}\] with relations $a_{1} b_{2}= 0$ and $a_{i} b_{i+1} =b_{i}
a_{i-1}$ for any $2 \leq i \leq n-1$.

\item $\Gamma_{n-1}$ is given by the quiver: \[\xymatrix{
1\ar@<2pt>[r]^{a_1}&2\ar@<2pt>[r]^{a_2}\ar@<2pt>[l]^{b_2}&3\ar@<2pt>[r]^{a_3}\ar@<2pt>[l]^{b_3}&\cdots\ar@<2pt>[r]^{a_{n-2}}\ar@<2pt>[l]^{b_4}&n-1\ar@<2pt>[l]^{b_{n-1}}
}\] with relations $a_{1} b_{2}= 0$, $a_{n-2}b_{n-1}=0$ and $a_{i}
b_{i+1} =b_{i} a_{i-1}$ for any $2 \leq i \leq n-2$.

\end{enumerate}
\end{example}

In the rest of this section, for an algebra $\Lambda_n$, we always assume that $e$ is the idempotent such that $e\Lambda_n$ is the additive generator of projective-injective $\Lambda_n$-modules.
Applying Theorem \ref{3.5} to the Auslander algebras of
$K[x]/(x^n)$, we get the following corollary which recovers the
results in \cite{Mi,IZ}.

\begin{corollary}\label{3.6}
Let $\Lambda_n$ be the Auslander algebra of $K[x]/(x^n)$,
$\Gamma_{n-1}$ be the preprojective algebra of $Q=A_{n-1}$ and $\mathfrak{S}_{n}$ be the symmetric group.  Then
there are bijections
\[\tilt_{1}\Lambda_{n}\simeq\sttilt\Gamma_{n-1}\simeq\mathfrak{S}_{n}.\]
Thus $\#\sttilt\Lambda_n=n!$.
\end{corollary}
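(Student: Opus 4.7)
The plan is to deduce the first bijection from Theorem \ref{3.5} and then invoke the classification of support $\tau$-tilting modules over preprojective algebras of Dynkin type to get the second bijection.

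First I would verify that the Auslander algebra $\Lambda_n$ of $K[x]/(x^n)$ satisfies the hypothesis of Theorem \ref{3.5}, i.e.\ that it is $1$-Gorenstein. This is standard: since $K[x]/(x^n)$ is self-injective, its module category has Auslander--Reiten translation given by $\tau = \mathrm{id}$, and the Auslander algebra $\Lambda_n$ is known to have dominant dimension equal to its global dimension, both equal to $2$, so in particular $\mathrm{id}_{\Lambda_n}\Lambda_n = 1$ when viewed via the relevant resolution. More concretely, from the quiver and relations in Example \ref{4.2}(1) one can check directly that $\Lambda_n$ is $1$-Gorenstein by writing down the minimal injective resolution of $\Lambda_n$ and observing that $\mathrm{pd}_{\Lambda_n} I^0(\Lambda_n) \le 1$.

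Next I would identify the basic projective-injective summand. The indecomposable projective-injective $\Lambda_n$-modules are exactly those of the form $P(i)$ that are also injective; using the quiver in Example \ref{4.2}(1), these correspond to the vertex $n$ (the rightmost vertex), so $e\Lambda_n = P(n)$. The key computational step is to identify the factor algebra $\Lambda_n/(e)$: comparing the quivers and relations in Example \ref{4.2}(1) and (2), killing the idempotent $e$ at vertex $n$ removes this vertex and the arrows $a_{n-1}, b_n$, and imposes the relation $a_{n-2}b_{n-1} = 0$ (since the previous relation $a_{n-2}b_{n-1} = b_{n-2}a_{n-3}$ becomes trivial only after passing to the quotient where a new zero relation appears). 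The resulting presentation is precisely that of the preprojective algebra $\Gamma_{n-1}$ of type $A_{n-1}$. Hence $\Lambda_n/(e) \simeq \Gamma_{n-1}$, and Theorem \ref{3.5} gives the first bijection $\tilt_1 \Lambda_n \simeq \sttilt \Gamma_{n-1}$.

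For the second bijection, I would cite the known classification of support $\tau$-tilting modules over preprojective algebras of Dynkin type due to Mizuno \cite{Mi}, which states that for the preprojective algebra of a Dynkin quiver $Q$ there is a bijection between $\sttilt$ and the corresponding Weyl group. For $Q = A_{n-1}$ the Weyl group is $\mathfrak{S}_n$, yielding $\sttilt \Gamma_{n-1} \simeq \mathfrak{S}_n$, and thus $\#\sttilt \Gamma_{n-1} = n!$. Composing the two bijections gives the cardinality claim.

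The main obstacle I anticipate is the identification $\Lambda_n/(e) \simeq \Gamma_{n-1}$: one must be careful to read off the induced relations after killing $e$, in particular to see that the commutation relations at the interior vertices are preserved while a new zero relation emerges at the rightmost remaining vertex. The rest is essentially bookkeeping and an appeal to Mizuno's theorem.
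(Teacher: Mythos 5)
Your proposal follows the paper's proof exactly: identify $\Lambda_n/(e)\simeq\Gamma_{n-1}$ by comparing the presentations in Example \ref{4.2}, apply Theorem \ref{3.5} to get $\tilt_1\Lambda_n\simeq\sttilt\Gamma_{n-1}$, and invoke Mizuno's classification to get $\sttilt\Gamma_{n-1}\simeq\mathfrak{S}_n$. One minor caveat on your preliminary verification (which the paper omits): with the paper's definition, $1$-Gorenstein requires $\pd_{\Lambda_n}I^0(\Lambda_n)\le 0$, not $\le 1$ --- this holds because the Auslander algebra has dominant dimension $2$ --- and your claim that $\id_{\Lambda_n}\Lambda_n=1$ is off (it equals $2$), but neither slip affects the argument.
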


\begin{proof} It is not difficult to show that $\Gamma_{n-1}\simeq \Lambda_n/(e)$ by Example \ref{4.2}. Then by Theorem \ref{3.5} $\tilt_{1}\Lambda_{n}\simeq\sttilt\Gamma_{n-1}$ holds. On the other hand, $\sttilt\Gamma_{n-1}\simeq\mathfrak{S}_{n}$ holds by \cite[Theorem 0.1]{Mi}. We are done.
\end{proof}
Recall that an algebra is called {\it $\tau$-tilting finite} if
there are finite number of basic $\tau$-tilting modules up to
isomorphism. Applying Theorem \ref{3.5} to the Auslander algebra of
a Nakayama hereditary algebra, we have the following corollary.
\begin{corollary}\label{4.5}Let $\Lambda_{n}$ be the Auslander algebra of the Nakayama
hereditary algebra $KQ$ with $Q=A_n$. Then there is a bijection
$\tilt_1{\Lambda_n}\simeq\sttilt\Lambda_{n-1}$.
Thus $\Lambda$ is $\tau$-tilting finite if and only if
$n\leq 4$.
\end{corollary}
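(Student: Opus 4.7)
The plan is to derive the bijection by applying Theorem \ref{3.5}, and then to address the finiteness statement by a reduction argument plus explicit verification in small cases.

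First, I would check that Theorem \ref{3.5} applies, that is, that $\Lambda_n$ is $1$-Gorenstein. Because $\Lambda_n$ is the Auslander algebra of the representation-finite hereditary algebra $KA_n$, its global dimension is at most $2$ and its dominant dimension is at least $2$; in particular it is $1$-Gorenstein (in fact $2$-Gorenstein), and there is an idempotent $e$ such that $e\Lambda_n$ is the basic additive generator of projective-injective $\Lambda_n$-modules.

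Second, I would identify the factor $\Lambda_n/(e)$ with $\Lambda_{n-1}$. The quiver of $\Lambda_n$ is the Auslander-Reiten quiver of $\mod KA_n$ equipped with mesh relations, its vertices corresponding to intervals $[i,j]$ with $1\le i\le j\le n$. The projective-injective $\Lambda_n$-modules correspond to the indecomposable injective $KA_n$-modules, which (under an appropriate orientation) are the intervals $[i,n]$. Removing these $n$ vertices together with their incident arrows and mesh relations leaves exactly the Auslander-Reiten quiver of $\mod KA_{n-1}$ with mesh relations, so $\Lambda_n/(e)\simeq\Lambda_{n-1}$. Applying Theorem \ref{3.5} then produces the bijection $\tilt_1\Lambda_n\simeq\sttilt\Lambda_{n-1}$.

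For the $\tau$-tilting finiteness statement, I would use that $\tau$-tilting finiteness descends along quotients: since $\Lambda_n/(e)\simeq\Lambda_{n-1}$, if $\Lambda_n$ is $\tau$-tilting finite then so is $\Lambda_{n-1}$. Contrapositively, $\tau$-tilting infiniteness propagates up the chain $\Lambda_5,\Lambda_6,\Lambda_7,\dots$, so it suffices to treat two base cases. The cases $n\le 4$ are handled by direct enumeration ($\Lambda_1=K$; $\Lambda_2$ is a short Nakayama algebra; $\Lambda_3,\Lambda_4$ are checked by hand or by a $g$-vector computation), giving that $\Lambda_n$ is $\tau$-tilting finite for $n\le 4$.

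The main obstacle is the remaining base case, namely that $\Lambda_5$ is $\tau$-tilting infinite. The bijection only gives $\tilt_1\Lambda_5\simeq\sttilt\Lambda_4$, which is finite by the $n\le 4$ case, and this is strictly weaker than $\sttilt\Lambda_5$ being finite. I would address this by exhibiting an explicit infinite family of pairwise non-isomorphic indecomposable $\tau$-rigid $\Lambda_5$-modules, for instance by selecting a suitable subquiver of the Auslander-Reiten quiver of $\mod KA_5$ whose associated bound path algebra admits an infinite family of $\tau$-rigid modules, and then transferring this family along the corresponding recollement or idempotent embedding into $\Lambda_5$. This last step is the computational heart of the argument.
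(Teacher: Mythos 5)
Your derivation of the bijection matches the paper's: verify that $\Lambda_n$ is $1$-Gorenstein (global dimension at most $2$ and dominant dimension at least $2$), identify $\Lambda_n/(e)$ with $\Lambda_{n-1}$ via the Auslander--Reiten quiver of $\mod KA_n$, and apply Theorem \ref{3.5}. That part is sound, and so is your observation that $\tau$-tilting finiteness passes to quotient algebras, which reduces the finiteness claim to the two base cases $\Lambda_4$ (finite) and $\Lambda_5$ (infinite).

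The problem is that neither base case is actually established. For $n\le 4$ you only assert that the enumeration can be ``checked by hand or by a $g$-vector computation'', and for the decisive case you propose to exhibit an infinite family of $\tau$-rigid $\Lambda_5$-modules by choosing ``a suitable subquiver'' and ``transferring along the corresponding recollement'' --- no subquiver is named, no family is written down, and you yourself flag this as the computational heart of the argument. As it stands the ``only if'' direction is unproved. The idea you are missing is to read the bijection in the other direction: $\sttilt\Lambda_{n}\simeq\tilt_1\Lambda_{n+1}$, so the finiteness of $\sttilt\Lambda_n$ is \emph{equivalent} to the finiteness of the set of classical tilting modules over the Auslander algebra $\Lambda_{n+1}$, and the latter was determined by Kajita \cite{K}: $\tilt_1\Lambda_m$ is finite if and only if $m\le 5$. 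This single input settles both directions simultaneously ($\Lambda_n$ is $\tau$-tilting finite iff $n+1\le 5$ iff $n\le 4$), with no small-case enumeration and no explicit infinite family needed. Without either that citation or an actual construction over $\Lambda_5$, your argument is incomplete.
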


\begin{proof} It is
not difficult to show that $\Lambda_{n}/(e)\simeq\Lambda_{n-1}$.
Then by Theorem \ref{3.5}, $\tilt
\Lambda_n\simeq\sttilt\Lambda_{n-1}$. On the other hand, it was showed in \cite{K} $\Lambda_n$ has finite number of classical tilting modules if and only if $n\leq 5$. Then the assertion holds.
\end{proof}

 For more details of $\tau$-rigid modules over Auslander algebras, we
refer to \cite{Z}. Recall from \cite[Proposition 1.17]{I} that a
hereditary algebra is 1-Gorenstein if and only if it is a Nakayama
algebra. Now we have the following corollary.

\begin{corollary}\label{3.8} Let $\Lambda_n=KQ$ be the Nakayama hereditary algebra with
$Q=A_n$.
 Then there are bijections
 \[\tilt_1{\Lambda_n}\simeq\sttilt\Lambda_{n-1}\simeq
\{\mbox{clusters of the cluster algebra of type $A_{n-1}$}\}.\]
Thus $\#\sttilt\Lambda_n=(2(n+1))!/((n+2)!(n+1)!)$.
\end{corollary}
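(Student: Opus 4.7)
The plan is to assemble three ingredients: Theorem \ref{3.5}, an explicit identification of the quotient $\Lambda_n/(e)$, and the well-known enumeration of clusters in type $A$.

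First I would verify that Theorem \ref{3.5} applies. The algebra $\Lambda_n = KQ$ with $Q = A_n$ is a hereditary Nakayama algebra and so is $1$-Gorenstein by \cite[Proposition 1.17]{I}, cited just before the statement. With linear orientation $1 \to 2 \to \cdots \to n$, the indecomposable projectives are the initial intervals $P(i) = [i,n]$ and the indecomposable injectives are the terminal intervals $I(j) = [1,j]$; the only module that is both is $P(1) = I(n)$. Hence $e = e_1$, the primitive idempotent at the source, and $\Lambda_n/(e_1)$ is the path algebra of the quiver obtained by deleting vertex $1$, namely $KA_{n-1} = \Lambda_{n-1}$. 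Theorem \ref{3.5} then produces the first bijection $\tilt_1\Lambda_n\simeq\sttilt\Lambda_{n-1}$.

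For the second bijection, I would invoke the known correspondence between support $\tau$-tilting modules over a Dynkin hereditary algebra and clusters of the associated cluster algebra: combine the \cite{AIR} identification of $\sttilt KQ$ with basic cluster-tilting objects in the cluster category of $Q$ with Fomin--Zelevinsky's bijection between cluster-tilting seeds and clusters. Specialized to $Q = A_{n-1}$, this yields $\sttilt\Lambda_{n-1}\simeq\{\text{clusters of the cluster algebra of type }A_{n-1}\}$, completing the chain of bijections.

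For the enumeration, the same reasoning applied with $Q = A_n$ gives $\sttilt\Lambda_n\simeq\{\text{clusters of type }A_n\}$, whose cardinality is the Catalan number $C_{n+1} = \frac{1}{n+2}\binom{2(n+1)}{n+1}=\frac{(2(n+1))!}{(n+2)!(n+1)!}$. I do not anticipate a real obstacle: the proof is essentially an assembly of Theorem \ref{3.5} with standard type $A$ combinatorics. The only point demanding care is the indexing shift, since the first bijection matches $\tilt_1\Lambda_n$ with $\sttilt\Lambda_{n-1}$ (one fewer vertex) while the final count is stated for $\sttilt\Lambda_n$ itself; one must apply the Dynkin--cluster bijection at the correct rank in each place.
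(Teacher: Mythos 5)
Your argument is correct and follows essentially the same route as the paper: identify the projective-injective $e\Lambda_n$ and compute $\Lambda_n/(e)\simeq\Lambda_{n-1}$, apply Theorem \ref{3.5} for the first bijection, and combine the correspondence of \cite{AIR} between support $\tau$-tilting modules and cluster-tilting objects with the cluster-tilting/cluster bijection of \cite{BMRRT} for the second. The only (harmless) deviation is that you obtain the cardinality from the Catalan count of clusters in type $A_n$, whereas the paper quotes Kajita's enumeration \cite{K} of tilting modules over $\Lambda_{n+1}$; both give $(2(n+1))!/((n+2)!(n+1)!)$.
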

\begin{proof}
A straight calculation shows that
$\Lambda_{n-1}\simeq\Lambda_n/(e)$. Then by Theorem \ref{3.5} and
\cite[Theorem 1]{K}, $\tilt_1{\Lambda_n}\simeq\sttilt\Lambda_{n-1}$ and $\#\sttilt\Lambda_n=(2(n+1))!/((n+2)!(n+1)!)$ hold. By \cite[Theorem 0.5]{AIR} and \cite[Theorem 4.5]{BMRRT}, one gets the second bijection.
\end{proof}

{\bf Acknowledgement} Parts of the work were done when the second
author visited Nagoya in the year 2015. The second author wants to
thank the first author for hospitality during the stay in Nagoya. He
also wants to thank other people in Nagoya for their kind help.
Parts of the results were presented in USTC, Tsinghua University,
Anhui University and Capital Normal University. The second author would like to
thank all the hosts there for hospitality. The authors thank the referee for useful suggestions to improve this paper.

\end{document}